\newcommand{\Z}{\mathbb{Z}}
\newcommand{\R}{\mathbb{R}}
\newcommand{\Q}{\mathbb{Q}}
\newcommand{\A}{\mathbb{A}}
\newcommand{\mcA}{\mathcal{A}}
\newcommand{\Oo}{\mathcal{O}}
\newcommand{\End}{\operatorname{End}}
\newcommand{\ab}{\operatorname{ab}}
\newcommand{\der}{\operatorname{der}}
\renewcommand{\sc}{\operatorname{sc}}
\newcommand{\rank}{\operatorname{rank}}
\newcommand{\Aut}{\operatorname{Aut}}
\newcommand{\Lie}{\operatorname{Lie}}
\newcommand{\Spec}{\operatorname{Spec}}
\newcommand{\Hom}{\operatorname{Hom}}
\newcommand{\Isom}{\operatorname{Isom}}
\newcommand{\Ann}{\operatorname{Ann}}
\newcommand{\Ext}{\mathbb{E}\operatorname{xt}}
\newcommand{\Sh}{\operatorname{Sh}}
\newcommand{\fppf}{\operatorname{fppf}}
\newcommand{\Gal}{\operatorname{Gal}}
\newcommand{\GSp}{\operatorname{GSp}}
\renewcommand{\tilde}{\widetilde}
\renewcommand{\int}{\operatorname{int}}
\renewcommand{\mod}{\operatorname{mod}}
\newenvironment{subproof}[1][\proofname]{%
  \begin{proof}[#1]%
}{%
  \end{proof}%
}
\title{Finiteness of fppf cohomology
}
\author{Yujie Xu}
\address{Department of Mathematics, Harvard University}
\curraddr{}
\email{yujiex@math.harvard.edu}
\date{}
\numberwithin{equation}{section}
\newtheorem{theorem}[equation]{Theorem}
\newtheorem{prop}[equation]{Proposition}
\newtheorem{lem}[equation]{Lemma}
\newtheorem{Coro}[equation]{Corollary}
\theoremstyle{definition}
\newtheorem{Defn}[equation]{Definition}
\newtheorem{remark}[equation]{Remark}
\newtheorem{numberedparagraph}[equation]{}
\begin{document}

\maketitle

\begin{abstract}
Let $R$ be a Henselian DVR with finite residue field. 
Let $G$ be a 
finite type, flat $R$-group scheme (not necessarily commutative) with smooth generic fiber. We show that $H^1_{\fppf}(\Spec R,G)$ is finite. We then give an application of the global analogue of this finiteness result to PEL type integral models of Shimura varieties. 
\end{abstract}

\tableofcontents

\section{Introduction}
We prove the finiteness of $H^1$ for the fppf cohomology in greater generality than the known results in existing literature. 
\begin{theorem}\label{intro-global-thm}(Theorem \ref{finiteness-OKS-flat-gp})
Let $K$ be a number field, $S$ a finite set of places of $K$, and $G$ a flat group scheme of finite type over $\Oo_{K,S}$ such that the connected component of identity in the generic fibre of $G$ is reductive. Then the set $H^1(\Spec\Oo_{K,S},G)$ is finite. 
\end{theorem}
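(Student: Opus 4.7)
The plan is to combine the local finiteness result already proved in the paper, applied at finitely many ``bad'' places, with a classical finiteness statement over a dense open $\Spec \Oo_{K,S'} \subseteq \Spec \Oo_{K,S}$ on which $G$ becomes smooth and its generic identity component spreads to a reductive group scheme.

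\textbf{Step 1 (spreading out).} First I would enlarge $S$ to a finite set $S'$ so that $G$ is smooth over $\Oo_{K,S'}$, the generic identity component $G_K^\circ$ (reductive, hence smooth) extends to a reductive group scheme $\mathcal{G}^\circ$ over $\Oo_{K,S'}$, and the component group $\pi_0(G_K)$ extends to a finite \'etale group scheme $\Pi$ over $\Oo_{K,S'}$. This yields a short exact sequence of smooth $\Oo_{K,S'}$-group schemes
\[
1 \longrightarrow \mathcal{G}^\circ \longrightarrow G \longrightarrow \Pi \longrightarrow 1.
\]
Such $S'$ exists because smoothness is open on the base, the generic fibre $G_K$ is smooth (since $G_K^\circ$ is reductive), and both reductive and finite-\'etale $K$-group schemes admit models of the corresponding type over a dense open of $\Spec \Oo_K$.

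\textbf{Step 2 (finiteness over $\Oo_{K,S'}$).} Borel--Serre's theorem for reductive group schemes over rings of $S$-integers of number fields gives finiteness of $H^1(\Oo_{K,S'}, \mathcal{G}^\circ)$, and Hermite--Minkowski gives finiteness of $H^1(\Oo_{K,S'}, \Pi)$ (it classifies finite \'etale covers of $\Spec \Oo_{K,S'}$ of bounded degree). A twisting argument on the nonabelian cohomology sequence attached to $1 \to \mathcal{G}^\circ \to G \to \Pi \to 1$, using that each inner form of $\mathcal{G}^\circ$ remains reductive and so is covered uniformly by Borel--Serre, then yields finiteness of $H^1(\Oo_{K,S'}, G)$.

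\textbf{Step 3 (excision from $S'$ down to $S$).} For $v \in S' \setminus S$, let $R_v$ be the Henselization of $\Oo_K$ at $v$; the local theorem of the paper gives finiteness of $H^1(R_v, G)$. An fppf excision / Beauville--Laszlo-style patching argument, applied with $Z = \{v : v \in S' \setminus S\}$ and its open complement $\Spec \Oo_{K,S'}$, produces a restriction map
\[
\varphi : H^1(\Oo_{K,S}, G) \longrightarrow H^1(\Oo_{K,S'}, G) \times \prod_{v \in S' \setminus S} H^1(R_v, G)
\]
whose target is finite by the previous step and the local theorem. A twisting argument then reduces finiteness of the fibres of $\varphi$ to finiteness of the double coset space
\[
\prod_{v \in S' \setminus S} G(R_v)\,\big\backslash\, G(K_v)\,\big/\, G(\Oo_{K,S'}),
\]
which follows from local compactness of $G(K_v)$, openness of $G(R_v) \subseteq G(K_v)$, and a strong-approximation-type density input for the diagonal image of $G(\Oo_{K,S'})$.

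\textbf{Main obstacle.} The delicate step is the last: setting up the fppf excision and controlling the double coset space for a non-smooth, non-commutative, possibly disconnected $G$. Strong approximation in its classical form applies only to simply connected semisimple groups, so one must instead exploit the d\'evissage $\mathcal{G}^\circ \subset G$ on the smooth locus and combine it with the local finiteness theorem at each $v \in S' \setminus S$, where $G$ is allowed to be an arbitrary finite-type flat group scheme with smooth generic fibre.
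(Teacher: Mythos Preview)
Your three-step architecture matches the paper's proof: enlarge $S$ to $S'$ (the paper writes $T$) so that $G$ becomes smooth with reductive identity component over $\Oo_{K,S'}$; prove finiteness of $H^1(\Oo_{K,S'},G)$ by d\'evissage through $1\to G^\circ\to G\to\pi_0(G)\to 1$; then control the fibres of the restriction map to $H^1(\Oo_{K,S'},G)\times\prod_{v\in S'\setminus S}H^1(\Oo_{K_v},G)$, using the local finiteness result at the bad places. In Step~2 the paper argues by hand (Poitou--Tate for the finite \'etale quotient, Hilbert~90 plus the Dirichlet unit theorem for tori, the Hasse principle for simply connected groups) rather than citing a packaged Borel--Serre statement, but this is only a cosmetic difference.

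The one genuine gap is in your Step~3. You correctly reduce the fibres of $\varphi$ to a double coset of the shape
\[
G(\Oo_{K,S'})\,\big\backslash\,\textstyle\prod_{v} G(K_v)\,\big/\,G(\Oo_{K_v})
\]
(more precisely, for the inner form $\underline{\Aut}_G(P)$), but you then reach for a ``strong-approximation-type density input'' and flag this as the main obstacle. Strong approximation is not the right tool here and, as you note, fails outside the simply connected case; the d\'evissage you propose to salvage it is vague. The paper instead invokes Borel--Harish-Chandra: for any affine algebraic $K$-group $H$ and any compact open $U\subset H(\mathbb{A}_f)$, the set $H(K)\backslash H(\mathbb{A}_f)/U$ is finite. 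What makes this applicable even at the non-smooth places $v\in S'\setminus S$ is the elementary observation that for any closed embedding of an affine finite-type $\Oo_{K_v}$-scheme $H$ into $\mathbb{A}^N$ one has $H(\Oo_{K_v})=H(K_v)\cap\Oo_{K_v}^N$, which is compact and open in $H(K_v)$ regardless of smoothness. Hence the relevant product $\prod_v H(\Oo_{K_v})$ is already a compact open subgroup of $H(\mathbb{A}_f^S)$, the double coset is a quotient of a genuine adelic class set, and Borel's theorem closes the argument with no further d\'evissage required.
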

We also give an application of Theorem \ref{intro-global-thm} to the Hodge morphism 
\[\mathscr{S}_K(G,X)\to\mathscr{S}_{K'}(\GSp,S^{\pm})\]
where $\mathscr{S}_K(G,X)$ is a PEL type integral model for the Shimura variety $\Sh_K(G,X)$ and $\mathscr{S}_{K'}(\GSp,S^{\pm})$ is a Siegel integral model (see section \ref{PEL-application-section}). 
The proof of the above theorem \ref{intro-global-thm} reduces to proving the following local analogue.
\begin{theorem}\label{intro-local-thm}(Corollary \ref{local-fppf-finiteness})
Let $R$ be a Henselian DVR with finite residue field. For $n$ large enough, the natural morphism,
\[\delta_n\colon H^1_{\fppf}(\Spec R,G)\to H^1_{\fppf}(\Spec R/\pi^nR,G)\]
is injective. Consequently, $H^1_{\fppf}(\Spec R,G)$ is finite.
\end{theorem}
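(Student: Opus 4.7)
The plan is to combine Greenberg's approximation theorem with the Greenberg realization functor, together with the finiteness of fppf cohomology for finite-type group schemes over a finite field. The overall strategy is to prove (a) finiteness of $H^1_{\fppf}(\Spec R/\pi^n R, G)$ for each $n$ and (b) injectivity of $\delta_n$ for $n$ large; these combined immediately give finiteness of $H^1_{\fppf}(\Spec R, G)$.

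For (a), the Greenberg functor attaches to $G$ a finite-type $k$-group scheme $\mathcal{G}_n$, where $k$ is the residue field, together with a natural bijection $H^1_{\fppf}(\Spec R/\pi^n R, G) \cong H^1_{\fppf}(\Spec k, \mathcal{G}_n)$. Finiteness of the right-hand side follows by d\'evissage along the sequence $1\to\mathcal{G}_n^{\circ}\to\mathcal{G}_n\to\pi_0(\mathcal{G}_n)\to 1$: Lang's theorem kills $H^1$ of a connected smooth component over a finite field, and Galois cohomology of a finite \'etale group over a finite field is finite. Extensions of finite pointed sets are finite, giving the result.

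For (b), by the twist bijection for non-abelian cohomology, for $\xi\in H^1_{\fppf}(\Spec R,G)$ the fibre $\delta_n^{-1}(\delta_n(\xi))$ is in bijection with $\ker\bigl(H^1_{\fppf}(\Spec R,G^\xi)\to H^1_{\fppf}(\Spec R/\pi^n R,G^\xi)\bigr)$, where $G^\xi$ is the twist by $\xi$, again a flat finite-type $R$-group scheme with smooth generic fibre. Thus injectivity reduces to showing that for each such $G^\xi$ and $n$ large, every $G^\xi$-torsor $T$ over $R$ whose reduction over $R/\pi^n R$ is trivial is itself trivial. Such a $T$ is flat of finite type over $R$ with smooth generic fibre (inherited from $G^\xi$), so Greenberg's approximation theorem (BLR \S3.6) provides a constant $c(T)$ such that $T(R/\pi^m R)\ne\emptyset$ for $m\ge c(T)$ implies $T(R)\ne\emptyset$, i.e.\ triviality of $T$.

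The main obstacle is uniformity: a single $n_0$ must work for all torsors under all relevant twists simultaneously. Finiteness from (a) bounds the number of distinct classes at each level, and since torsors under a fixed $G^\xi$ are fppf-locally isomorphic to $G^\xi$, they should share a common Greenberg constant determined by the ``infinitesimal data'' of $G^\xi$ itself. Making this precise—controlling Greenberg's constant across all twists and their torsors—is the technical core of the argument. Once a uniform $n_0$ is secured, $H^1_{\fppf}(\Spec R,G)$ embeds into the finite set $H^1_{\fppf}(\Spec R/\pi^{n_0} R,G)$, and the finiteness follows.
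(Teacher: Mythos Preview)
Your strategy is the same as the paper's in outline: reduce injectivity of $\delta_n$ to an approximation statement (sections of $\underline{\Isom}(P,P')$ over $R/\pi^n$ lift to $R$ once $n$ is large), and then argue that the threshold $n$ can be chosen uniformly in $P,P'$. You correctly isolate this uniformity as ``the technical core'', but you do not prove it, and that is precisely the content the paper supplies.

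The paper does not use Greenberg's theorem but the Gabber--Ramero lifting criterion: to an affine finite-type flat generically smooth $R$-scheme $X=\Spec S$ one attaches the ideal $H_R(S):=\Ann_S\Ext^1_S(\mathbb{L}_{S/R},J/J^2)$, and if $\pi^h$ kills $S/H_R(S)$ then any $R/\pi^n$-point of $X$ with $n>2h$ lifts to an $R$-point. Uniformity is the equality $h_{\underline{\Isom}(P,P')}=h_G$, proved by showing that the formation of $H_R(-)$ commutes with flat base change and that $\underline{\Isom}(P,P')$ becomes isomorphic to $G$ after an fppf cover; hence $S/H_R(S)$ is killed by the same power of $\pi$ for $X=G$ and for $X=\underline{\Isom}(P,P')$. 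Your heuristic that fppf forms ``should share a common Greenberg constant determined by the infinitesimal data'' is exactly right in spirit, and the cotangent-complex annihilator is the invariant that makes it precise. Without producing such an invariant, your argument is incomplete.

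Two further points. Your claimed bijection $H^1_{\fppf}(R/\pi^n,G)\cong H^1_{\fppf}(k,\mathcal{G}_n)$ via the Greenberg functor is not available when $G$ is non-smooth over $R/\pi^n$: the Greenberg functor is a Weil-restriction-type right adjoint and does not in general transport fppf torsors, and your appeal to Lang's theorem requires $\mathcal{G}_n^{\circ}$ to be smooth, which is not guaranteed here. (Finiteness of $H^1_{\fppf}(R/\pi^n,G)$ does hold, since $R/\pi^n$ is a finite ring, but it needs a different justification.) And Greenberg approximation in the form of BLR~\S3.6 requires $R$ complete, or at least excellent; the Gabber--Ramero version applies to an arbitrary Henselian pair, which is why the paper proves the injectivity in that generality and only then specializes.
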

The proof of Theorem \ref{intro-local-thm} amounts to applying results of \cite{Gabber-Ramero} on the vanishing of $\Ext^1$ of certain cotangent complexes, which measures obstruction to deformation of torsors. In fact, we prove a slightly stronger result than Theorem \ref{intro-local-thm} (see Proposition \ref{Henselian-pair-injective-lemma}).

\section{The local case}
\begin{numberedparagraph}
Let $(R,\pi)$ be a Henselian pair. Let $G$ be a finite type, flat $R$-group scheme (not necessarily commutative) that is smooth over $R[1/\pi]$.

We place ourselves in the situation of \cite[5.4.7.]{Gabber-Ramero}, where 
$F=R[X_1,\cdots,X_N]$ is a free $R$-algebra of finite type, and $J\subset F$ is a finitely generated ideal such that $S=F/J$ and $G\cong \Spec S$. Let $P,P'$ be two arbitrary $G$-torsors and $\Isom(P,P')$ be the Isom scheme. 
To differentiate the notations, we will write $G\cong \Spec F_G/J_G$ and 
$\Isom(P,P')\cong \Spec F_{\Isom(P,P')}/J_{\Isom(P,P')}$. For any $a:=(a_1,\cdots, a_N)\in R^N$, let $\mathfrak{p}_a\subset F$ be the ideal generated by $(X_1-a_1,\cdots,X_N-a_N)$. Let $I\subset R$ be an ideal. We set the following ideal $H_R(F,J)$ of $F$ as in \cite[Definition 5.4.1]{Gabber-Ramero}, 
\begin{equation}\label{Definition-HRFJ}
H_R(F,J):=\Ann_F\Ext_S^1(\mathbb{L}_{S/R},J/J^2),
\end{equation}
where $\mathbb{L}_{S/R}$ is the cotangent complex. 
\begin{lem}\cite[Lemma 5.4.8]{Gabber-Ramero}
Let $n,h\in\mathbb{N}$ and $a\in R^N$ such that 
\begin{equation}\label{Gabber-Ramero-5.4.9}
    t^h\in H_R(F,J)+\mathfrak{p}_a\qquad J\subset\mathfrak{p}_a+t^nIF\qquad n>2h.
\end{equation}
Then there exists $b\in R^N$ such that 
\[b-a\in t^{n-h}IR^N\qquad\text{and}\qquad J\subset \mathfrak{p}_b+(t^{n-h}I)^2F.\]
\end{lem}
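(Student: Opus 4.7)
The plan is to construct the correction $c := b - a$ explicitly, by using the defining property of $H_R(F,J)$ to produce a derivation that encodes a ``Newton step'' lifting $a$ to a better approximate root of $J$.

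First I extract geometric content from $t^h \in H_R(F,J) + \mathfrak{p}_a$. Write $t^h = u + v$ with $u \in H_R(F,J)$ and $v \in \mathfrak{p}_a$, so that $u$ annihilates $\Ext_S^1(\mathbb{L}_{S/R}, J/J^2)$. The transitivity triangle $\mathbb{L}_{F/R} \otimes_F S \to \mathbb{L}_{S/R} \to \mathbb{L}_{S/F}$, combined with the fact that $F$ is polynomial (so $\mathbb{L}_{F/R}$ is free) and with $H^{-1}(\mathbb{L}_{S/F}) = J/J^2$, yields an exact sequence
\[
\Der_R(F, J/J^2) \xrightarrow{\text{res}} \Hom_S(J/J^2, J/J^2) \to \Ext_S^1(\mathbb{L}_{S/R}, J/J^2) \to 0,
\]
in which the image of $\Id_{J/J^2}$ is the fundamental (obstruction) class $\omega$. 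Since $u \cdot \omega = 0$, the element $u \cdot \Id_{J/J^2}$ is the restriction of some $R$-derivation $D \colon F \to J/J^2$. Setting $g_i := D(X_i) \in J/J^2$ and choosing lifts $\tilde g_i \in J$, the derivation identity $D(f) = \sum_i (\partial_i f)\, g_i$ yields
\[
uf \equiv \sum_i (\partial_i f)\, \tilde g_i \pmod{J^2} \qquad \text{for every } f \in J.
\]

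Next I evaluate this congruence at $a$ via $\phi_a \colon F \to R$, $X_i \mapsto a_i$. The inclusion $J \subset \mathfrak{p}_a + t^n IF$ gives $\tilde g_i(a) \in t^n I$ and $J^2(a) \subset t^{2n}I^2$, while $v \in \mathfrak{p}_a$ forces $u(a) = t^h$. Therefore
\[
t^h f(a) \equiv \sum_i \partial_i f(a)\, \tilde g_i(a) \pmod{t^{2n}I^2}.
\]
Since $t$ is a non-zero-divisor in $R$ and $\tilde g_i(a) \in t^h \cdot t^{n-h}I$, I set $c_i := -\tilde g_i(a)/t^h \in t^{n-h}I$ and $b := a + c$. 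Dividing the congruence by $t^h$ gives
\[
f(a) + \sum_i \partial_i f(a)\, c_i \in t^{2n-h}I^2 \subseteq (t^{n-h}I)^2,
\]
where the last inclusion uses $n > 2h$. A Taylor expansion $f(a + c) = f(a) + \sum_i \partial_i f(a)\, c_i + (\text{terms at least quadratic in } c)$, whose quadratic tail lies in $(t^{n-h}I)^2 R$ because $c \in t^{n-h}I R^N$, then gives $f(b) \in (t^{n-h}I)^2 R$, i.e.\ $f \in \mathfrak{p}_b + (t^{n-h}I)^2 F$, for every $f \in J$.

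The principal conceptual step is the identification of $u \cdot \Id_{J/J^2}$ with the restriction of a derivation; this rests on the presentation of $\Ext_S^1(\mathbb{L}_{S/R}, J/J^2)$ as a quotient of $\Hom_S(J/J^2, J/J^2)$ coming from the naive cotangent complex. The remaining manipulations are bookkeeping with Taylor expansions and ideals, supported crucially by $t$ being a non-zero-divisor (as in the intended Henselian-DVR setting).
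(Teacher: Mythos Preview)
The paper does not supply its own proof of this lemma; it is quoted verbatim from \cite[Lemma~5.4.8]{Gabber-Ramero} and used as a black box. Your argument is correct and is, in fact, the argument given in Gabber--Ramero: one interprets $H_R(F,J)$ via the presentation $\Ext^1_S(\mathbb{L}_{S/R},J/J^2)=\operatorname{coker}\bigl(\Der_R(F,J/J^2)\to\Hom_S(J/J^2,J/J^2)\bigr)$, lifts $u\cdot\Id_{J/J^2}$ to a derivation, and reads off a Newton correction from the derivation's values on the generators. The only point worth flagging is that your division by $t^h$ genuinely uses that $t$ is a non-zero-divisor; in the paper's applications ($R$ a Henselian DVR, or a Henselian pair with $\pi$ a non-zero-divisor) this is harmless, and you already note it, but the Gabber--Ramero statement is phrased more generally. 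If you want to avoid the division, simply write $\tilde g_i(a)=t^h d_i$ with $d_i\in t^{n-h}I$ (possible because $t^nI=t^h\cdot t^{n-h}I$ as ideals) and argue with $d_i$ directly, though one still needs some regularity of $t$ to cancel it from the resulting congruence.
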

\begin{Defn}
Let $h_{G}$ denote the smallest integer satisfying the condition of \ref{Gabber-Ramero-5.4.9} 
where $F=F_G$ and $J=J_G$. 
\end{Defn}
One can likewise define the integer $h_{\Isom(P,P')}$.

\begin{prop}\label{Henselian-pair-injective-lemma}
Let $(R,\pi)$ be a general Henselian pair with $\pi$ a non-zero-divisor. Let $G$ be a finite type, flat $R$-group scheme (not necessarily commutative) that is smooth over $R[1/\pi]$. 
There exists an $N$, which only depends on $G$
, such that 
$H^1(R, G) \to H^1(R/{\pi}^N, G)$ is injective.  
\end{prop}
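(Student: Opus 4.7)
The plan is to reformulate injectivity as an $R$-point existence statement for the Isom-scheme $X := \Isom_G(P, P')$, then apply \cite[Lemma 5.4.8]{Gabber-Ramero} iteratively. Suppose $[P], [P'] \in H^1(R, G)$ have the same image in $H^1(R/\pi^N, G)$. Equivalently, $X(R/\pi^N) \neq \emptyset$, and it suffices to show $X(R) \neq \emptyset$, which will give $[P] = [P']$. The scheme $X$ is of finite type and flat over $R$, smooth over $R[1/\pi]$ (where it is a torsor under $G_{R[1/\pi]}$), and fppf-locally on $\Spec R$ it is isomorphic to $G$ as an $R$-scheme. Hence $X$ fits the framework of \cite[\S 5.4]{Gabber-Ramero}, with associated integer $h_X := h_{\Isom(P, P')}$.

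Given an $R/\pi^N$-point of $X$, I would lift its coordinates arbitrarily to $a \in R^{N_0}$, so that $J_X \subset \mathfrak{p}_a + \pi^N F_X$. For $N > 2 h_X$ and with $\pi^{h_X} \in H_R(F_X, J_X)$ (which holds because smoothness of $X$ over $R[1/\pi]$ makes $\Ext^1_{S_X}(\mathbb{L}_{S_X/R}, J_X/J_X^2)$ a $\pi$-power-torsion $R$-module), \cite[Lemma 5.4.8]{Gabber-Ramero} applied with $t = \pi$ and $I = R$ produces $b \in R^{N_0}$ with $b - a \in \pi^{N - h_X} R^{N_0}$ and $J_X \subset \mathfrak{p}_b + \pi^{2(N - h_X)} F_X$. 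Iterating doubles the precision at each step; the resulting sequence of approximate points is $\pi$-adically Cauchy, and Henselianity of $(R, \pi)$ together with the approximation results of \cite[\S 5.4]{Gabber-Ramero} promotes it to an actual $R$-point of $X$.

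The main obstacle is uniformity: $N$ must depend only on $G$, not on the pair $(P, P')$. Since $X$ is fppf-locally isomorphic to $G$ as an $R$-scheme, flat descent for the cotangent complex should bound the $\pi$-power annihilator of $\Ext^1_{S_X}(\mathbb{L}_{S_X/R}, J_X/J_X^2)$ uniformly in terms of the corresponding annihilator for $G$. Consequently there is a constant $C = C(G)$ with $h_X \leq C$ for every pair $(P, P')$, and any $N > 2C$ suffices. Establishing this uniform bound --- the flat-descent step for the $h$-invariant --- is the crux of the argument; once it is in hand, the lifting machinery of Gabber-Ramero applies essentially verbatim and yields the desired $N$.
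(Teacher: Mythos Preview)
Your proposal is correct and follows essentially the same approach as the paper: reduce injectivity to $\underline{\Isom}(P,P')(R)\neq\varnothing$, invoke the Gabber--Ramero lifting machinery, and handle uniformity of the $h$-invariant via flat descent for the cotangent complex. The paper cites Gabber--Ramero's Lemma~5.4.13 directly (rather than iterating Lemma~5.4.8 by hand) and isolates the uniformity step as a separate lemma, obtaining in fact the equality $h_{\underline{\Isom}(P,P')}=h_G$ by exactly the faithfully-flat base-change argument you outline.
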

\begin{proof}
Suppose $[P],[P']\in H^1(\Spec R,G)$ map to the same class $[P_n]\in H^1(\Spec R/\pi^nR,G)$ under $\delta_n$. Thus we have $[\underline{\Isom}(P,P')]\in\delta_n^{-1}([\underline{\Isom}(P_n,P_n)])$. 
Now $\underline{\Isom}(P_n,P_n)$ is the trivial 
$\Aut_{G_n}(P_n)$-torsor 
on $\Spec R/\pi^n$ 
, i.e. we have a section
\[\sigma_n\colon\Spec R/\pi^n\to \underline{\Isom}(P_n,P_n).\] 
Thus $\underline{\Isom}(P,P')(R/\pi^n)\neq\varnothing$. By \cite[Lemma 5.4.13]{Gabber-Ramero} whose notations we also adopt, as long as $n>2h_{\underline{\Isom}(P,P'
)}$, there exists a lifting 
\[b_{P,P'}=\Tilde{\sigma_{P,P'}}\colon \Spec R\to \underline{\Isom}(P,P'),\] 
such that 
$\Tilde{\sigma_{P,P'}}|_{\Spec R/\pi^{n-h_{\underline{\Isom}(P,P')}}}=\sigma_{P,P'}|_{\Spec R/\pi^{n-h_{\underline{\Isom}(P,P')}}}$. In other words, as long as $n>2h_{\underline{\Isom}(P,P
')}$, we have $\underline{\Isom}(P,P')(R)\neq\varnothing$, which implies that $\underline{\Isom}(P,P')$ is a trivial $\Aut_G(P)
$-torsor on $\Spec R$, thus $P\cong P'$ as $G$-torsors on $\Spec R$, and thus $[P]=[P']\in H^1(\Spec R,G)$. Thus the map $\delta_n$ is injective. 

Thus it only remains to show that one can indeed find such a uniform 
integer $n$ that works for all $G$-torsors $P,P'$ on $\Spec R$, i.e. we want to find an 
integer $n$ such that 
$n>2h_{\underline{\Isom}(P,P')}$ 
for all $G$-torsors $P,P'$ on $\Spec R$. By the following Lemma \ref{affine-main-lemma}, this is indeed the case; therefore we have our desired result.
\end{proof}

\begin{Coro}\label{local-fppf-finiteness}
Let $R$ be a Henselian DVR with finite residue field. Let $G$ be a 
finite type, flat $R$-group scheme (not necessarily commutative) with smooth generic fiber\footnote{Technically speaking, we also need to assume that $G$ is affine, but 
the result holds in the stated generality by essentially the same proof, see \ref{quasi-affine-case} and 
\ref{algebraic-spaces-cotangent-complexes-remark}.}.
For $n$ large enough, the natural morphism,
\[\delta_n\colon H^1_{\fppf}(\Spec R,G)\to H^1_{\fppf}(\Spec R/\pi^nR,G)\]
is injective. Consequently, $H^1_{\fppf}(\Spec R,G)$ is finite.
\end{Coro}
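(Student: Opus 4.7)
The plan splits into two parts: first establish the injectivity of $\delta_n$ for $n$ sufficiently large, and then combine this with finiteness of the target to conclude finiteness of $H^1_{\fppf}(\Spec R, G)$.

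For the injectivity, I would apply Proposition \ref{Henselian-pair-injective-lemma} directly. A Henselian DVR $R$ gives a Henselian pair $(R,\pi R)$ with uniformizer $\pi$ a non-zero-divisor, and the hypothesis that $G$ is flat of finite type over $R$ and smooth over $R[1/\pi]$ is part of the setup of the corollary. The proposition then produces an integer $N$, depending only on $G$, such that $\delta_n$ is injective for all $n\ge N$.

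For the finiteness, it suffices by the injectivity to show that $H^1_{\fppf}(\Spec R/\pi^nR, G)$ is finite for some fixed $n\ge N$. Since the residue field $R/\pi R$ is finite and $A:=R/\pi^n R$ has length $n$ over $R$, the ring $A$ is itself finite as a set. I would then reduce to the following general assertion: for any finite commutative ring $A$ and any finite type flat $A$-group scheme $H$, the set $H^1_{\fppf}(\Spec A, H)$ is finite. This I would prove by induction on the length of $A$. The base case is $A$ a finite field, which is classical: Lang's theorem handles the identity component, while the finite component group contributes a finite Galois cohomology set. For the inductive step, I would pick a square-zero ideal $I\subset A$ and control the fibers of the restriction map $H^1_{\fppf}(\Spec A, H)\to H^1_{\fppf}(\Spec A/I, H)$.

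The main obstacle is this inductive step, since $G$ is only assumed smooth over $R[1/\pi]$ and may fail to be smooth modulo $\pi$, so one cannot naively bound fibers by cohomology with coefficients in $\Lie(H)\otimes I$. The remedy is to reuse the Gabber--Ramero cotangent complex framework already invoked in Proposition \ref{Henselian-pair-injective-lemma}: the deformation-theoretic obstructions and the torsor structure on the non-empty fibers are controlled by $\Ext^i$ of $\mathbb{L}_{H/(A/I)}$ with coefficients in $I$, and since $I$ is a finite module over a finite ring, these $\Ext$ groups are finite. Combined with the inductive hypothesis, this gives finiteness of $H^1_{\fppf}(\Spec A, H)$, and hence of $H^1_{\fppf}(\Spec R, G)$.
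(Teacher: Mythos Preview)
Your proposal is correct and, for the injectivity assertion, is identical to the paper's argument: both simply invoke Proposition~\ref{Henselian-pair-injective-lemma}. The paper's entire proof of the corollary is in fact the single sentence ``This follows directly from Proposition~\ref{Henselian-pair-injective-lemma},'' leaving the finiteness of the target $H^1_{\fppf}(\Spec R/\pi^n R, G)$ over the finite ring $R/\pi^n R$ to the reader. You go further and sketch an argument for this by induction on the length of $R/\pi^n R$, which is a reasonable way to fill in what the paper takes for granted.

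One small correction to your base case: Lang's theorem in its standard form requires the connected group to be \emph{smooth}, and since $G$ is only assumed smooth over $R[1/\pi]$, the special fiber $G_k$ need not be. A clean substitute is to embed $G_k$ as a closed subgroup of some $\GL_{n,k}$ (always possible for an affine group scheme of finite type over a field) and use the exact sequence of pointed sets
\[
\GL_n(k)\to(\GL_n/G_k)(k)\to H^1(k,G_k)\to H^1(k,\GL_n)=\ast
\]
together with finiteness of $(\GL_n/G_k)(k)$. Your inductive step via the cotangent complex is correct in outline: the deformations of a $G_{\bar A}$-torsor along a square-zero thickening $A\to\bar A=A/I$ are governed (after Illusie) by twisted $\Ext$ groups of the co-Lie complex $e^*\mathbb{L}_{G/\bar A}$ against $I$; since $G$ is a local complete intersection (as used elsewhere in the paper) this complex is perfect of amplitude $[-1,0]$, and the relevant $\Ext$ groups are finitely generated modules over the finite ring $\bar A$, hence finite.
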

\begin{proof}
This follows directly from Proposition \ref{Henselian-pair-injective-lemma}. 
\end{proof}
\end{numberedparagraph}

\begin{numberedparagraph}
Recall the ideal $H_R(F,J)$ from 
\ref{Definition-HRFJ}
\begin{equation}\label{defining-HRFJ}
H_R(F,J):=\Ann_F\Ext^1_S(\mathbb{L}_{S/R},J/J^2)
\end{equation}
By \cite[Lemma 5.4.2(iii)]{Gabber-Ramero}, we have
\[H_R(F,J)=\bigcap\limits_{\text{all $S$-modules $N$}}\Ann_F \Ext_S^1(\mathbb{L}_{S/R},N)\]
Therefore, we can rewrite $H_R(F,J):=H_R(F/J):=H_R(S)$, i.e. the ideal $H_R(F,J)$ only depends on $S=F/J$, and is therefore independent of the presentation of $S$. In order to prove the desired Lemma \ref{affine-main-lemma}, our goal is to show that
\begin{equation}
    H_R(F_G/J_G)=H_R(F_{\mathrm{Isom}}/J_{\mathrm{Isom}})
\end{equation}
To this end, we prove the following series of lemmas.
\end{numberedparagraph}

\begin{lem}\label{HR-flat-base-change}
The formation of $H_R(F,J)$ commutes with flat base change. i.e. \[H_R(S)\otimes_RR'\cong H_{R'}(S\otimes_RR')\]
\end{lem}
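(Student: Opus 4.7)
The plan is to trace the flat base change $R \to R'$ through each ingredient of
\[
H_R(F,J) = \Ann_F \Ext^1_S(\mathbb{L}_{S/R}, J/J^2).
\]
Set $F' := F \otimes_R R'$, $J' := J F' \subseteq F'$, and $S' := S \otimes_R R'$. Flatness of $R \to R'$ makes $J \otimes_R R' \hookrightarrow F'$ injective, so $J' \cong J \otimes_R R'$ and $J'/(J')^2 \cong (J/J^2) \otimes_S S'$; thus the conormal module commutes with the base change. Next, the standard flat base change for the cotangent complex supplies a canonical quasi-isomorphism
\[
\mathbb{L}_{S/R} \otimes^L_S S' \;\simeq\; \mathbb{L}_{S'/R'},
\]
and flatness of $S \to S'$ lets us replace $\otimes^L$ with $\otimes$.

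Since $F$ is a polynomial ring of finite type and $J$ is finitely generated, $S$ is of finite presentation over $R$, and hence $\mathbb{L}_{S/R}$ is pseudo-coherent, i.e.\ quasi-isomorphic to a bounded-above complex of finitely generated free $S$-modules. For such a complex and any $S$-module $N$, the usual argument using a finite free resolution together with flatness of $S \to S'$ yields
\[
\Ext^i_S(\mathbb{L}_{S/R}, N) \otimes_S S' \;\cong\; \Ext^i_{S'}(\mathbb{L}_{S/R} \otimes_S S', \, N \otimes_S S').
\]
Taking $i=1$ and $N = J/J^2$, and combining with the identifications above, gives
\[
\Ext^1_S(\mathbb{L}_{S/R}, J/J^2) \otimes_R R' \;\cong\; \Ext^1_{S'}(\mathbb{L}_{S'/R'}, J'/(J')^2).
\]
Pseudo-coherence of $\mathbb{L}_{S/R}$ together with finite generation of $J/J^2$ also guarantees that this $\Ext^1$ is a finitely generated $S$-module, and in particular a finitely generated $F$-module.

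To conclude, I would commute $\Ann_F$ with the flat base change using the following observation: if $M$ is generated over $F$ by $m_1, \dots, m_k$, then $\Ann_F(M) = \bigcap_{i=1}^k \ker(F \xrightarrow{\cdot m_i} M)$, a finite intersection of kernels of $F$-linear maps. Exactness of $-\otimes_R R'$ commutes with both kernels and finite intersections of submodules of $F$, so $\Ann_F(M) \otimes_R R' \cong \Ann_{F'}(M \otimes_R R')$. Applying this to $M = \Ext^1_S(\mathbb{L}_{S/R}, J/J^2)$ finishes the proof. The only non-formal input is the pseudo-coherence of $\mathbb{L}_{S/R}$ (standard for finitely presented algebras); everything else is a direct consequence of the exactness of flat base change.
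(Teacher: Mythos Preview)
Your argument follows essentially the same route as the paper: both proofs trace flat base change through the cotangent complex, then $\Ext^1$, then the annihilator, and both hinge on finite generation of $\Ext^1_S(\mathbb{L}_{S/R},J/J^2)$ to make the annihilator step go through. The one substantive difference is how that finite generation is obtained. The paper invokes the fact that the schemes in play ($G$ and $\Isom$) are local complete intersections, so $\mathbb{L}_{S/R}$ is a \emph{perfect} complex of amplitude $[-1,0]$; then $\Ext^1$ is a cokernel of a map between finite direct sums of copies of $J/J^2$ and is automatically finitely generated, with no Noetherian hypothesis on $R$. You instead appeal to pseudo-coherence of $\mathbb{L}_{S/R}$, which is enough for flat base change of $\Ext$, but only yields that $\Ext^1$ is a \emph{subquotient} of a finite sum of copies of $J/J^2$; concluding finite generation from this needs $S$ (equivalently $R$) Noetherian. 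In the paper's main application $R$ is a Henselian DVR and your argument is fine, but for the general Henselian pair of Proposition~\ref{Henselian-pair-injective-lemma} you should either add a Noetherian hypothesis or, better, use the lci input as the paper does.
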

\begin{proof}
First note that the formation of the cotangent complexes and the formation of $\Ext_S^1$ both commute with flat base change, i.e. we have
\[\Ext_S^1(\mathbb{L}_{S/R},J/J^2)\otimes_RR'\cong \Ext_{S\otimes_R R'}^1(\mathbb{L}_{S\otimes_RR'},J'/{J'}^2)\]
where $J'=J\otimes_RR'$. 

Since $G$ and $\Isom$ are complete local intersections, their cotangent complexes $\mathbb{L}_{S/R}$ are perfect complexes concentrated in degrees $0$ and $1$. Therefore $\Ext_S^1(\mathbb{L}_{S/R},J/J^2)$ is a finite-type $S$-module. Therefore, the formation of the annihilator $\Ann\Ext_S^1(\mathbb{L}_{S/R},J/J^2)$ commutes with flat base change. Thus we have
\[\big(\Ann_{F}\Ext_S^1(\mathbb{L}_{S/R},J/J^2)\big)\otimes_RR'\cong \Ann_{F'}\Ext_{S\otimes_R R'}^1(\mathbb{L}_{S\otimes_RR'},J'/{J'}^2)\]
where $F'=F\otimes_RR'$. Since the ideal $H_R(F,J)$ is independent of the presentation in terms of $S=F/J$, we have
\[H_R(S)\otimes_RR'\cong H_{R'}(S\otimes_RR')\]
i.e. the formation of the ideal $H_R(F,J)$ commutes with flat base change.
\end{proof}

\begin{lem}\label{affine-main-lemma}
For any $G$-torsors $P,P'$ on $\Spec R$,
\[h_{\underline{\Isom}(P,P')}=h_{\Aut_G(P)}=h_P=h_{P'}=h_G\]
\end{lem}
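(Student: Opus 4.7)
The plan is to exploit fppf descent together with the flat base change property of $H_R$ from Lemma \ref{HR-flat-base-change} and the fact that $H_R(F,J)$ depends only on $S=F/J$ (not on the presentation). The crucial observation is that $P, P', \Aut_G(P)$, and $\underline{\Isom}(P,P')$ are all fppf-locally isomorphic to $G$ as $R$-schemes: $P$ and $P'$ are $G$-torsors by definition, $\Aut_G(P)$ is the inner form of $G$ twisted by $P$, and $\underline{\Isom}(P,P')$ is an $\Aut_G(P)$-torsor, all of which trivialize over a common faithfully flat finitely presented $R$-algebra $R'/R$ over which both $P$ and $P'$ become trivial. Thus over $R'$ one has
\[G_{R'} \;\cong\; P_{R'} \;\cong\; P'_{R'} \;\cong\; \Aut_G(P)_{R'} \;\cong\; \underline{\Isom}(P,P')_{R'}\]
as $R'$-schemes.

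First I would reformulate $h_X$ (for $X$ any of our five schemes) as an intrinsic invariant of $S_X = \Oo(X)$. Using Lemma 5.4.2(iii) of \cite{Gabber-Ramero} we may regard $\bar H_R(S_X) \subset S_X$ as the image of $H_R(F_X, J_X)$, an ideal depending only on the coordinate ring. The defining condition $t^h \in H_R(F,J) + \mathfrak{p}_a$ then unpacks to an intrinsic statement about $t^h$ acting on the quotient $S_X/\bar H_R(S_X)$ (evaluated at the relevant approximate $R$-points), so that $h_X$ is well-defined in terms of $S_X$ and $t \in R$ alone. Second, by Lemma \ref{HR-flat-base-change},
\[\bar H_R(S_X) \otimes_R R' \;\cong\; \bar H_{R'}(S_X \otimes_R R'),\]
and under the $R'$-isomorphisms above, the five right-hand sides are identified as a single ideal in a common coordinate ring. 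Hence, after base change to $R'$, the ideals and the associated integers agree.

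Finally, since $R \to R'$ is faithfully flat, the condition that $t^h$ lies in $\bar H_R(S_X)$ (or more precisely, in $\bar H_R(S_X) + \mathfrak{p}_a$ at the relevant points) is fpqc-local and thus descends from $R'$ to $R$. This forces $h_G = h_P = h_{P'} = h_{\Aut_G(P)} = h_{\underline{\Isom}(P,P')}$. The main obstacle I anticipate is the bookkeeping to make the "intrinsic characterization" of $h_X$ completely rigorous — in particular, showing that the role of $\mathfrak{p}_a$ in condition \ref{Gabber-Ramero-5.4.9} can be eliminated or handled uniformly under the isomorphisms above — but once that is spelled out, the descent argument applies verbatim to all four equalities simultaneously.
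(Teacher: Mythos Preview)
Your proposal is correct and follows essentially the same approach as the paper: trivialize everything over a faithfully flat $R'$, use Lemma~\ref{HR-flat-base-change} to identify the ideals $H_R(-)$ after base change, and then descend by faithful flatness. The paper dispatches the $\mathfrak{p}_a$ bookkeeping you flag by reformulating the condition $N\ge h_X$ as ``$\pi^N$ kills $S_X/H_R(S_X)$'' (invoking \cite[Lemma~5.4.8]{Gabber-Ramero}), which is manifestly intrinsic to $S_X$ and makes the descent step a one-line diagram chase.
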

\begin{proof}
It suffices to show that $h_{\underline{\Isom}(P,P')}=h_G$, the other equalities follow by exactly the same argument. To check whether $N\geq h$, by \cite[Lemma 5.4.8]{Gabber-Ramero}, we want to check whether $S/H_R(S)$ is killed by $\pi^N$. We have the following diagram
\begin{equation}\label{diagram-for-G}
\begin{tikzcd}S/H_R(S)\arrow[]{r}{\pi^N}\arrow[hook]{d}{}&S/H_R(S)\arrow[hook]{d}{}\\
\big(S/H_R(S)\big)\otimes_{R}R'\arrow[]{r}{\pi^N}&\big(S/H_R(S)\big)\otimes_{R}R'
\end{tikzcd}
\end{equation}
The map $\pi^N$ in the second row being zero implies that in the first row being zero. By Lemma \ref{HR-flat-base-change}, we have
\[\Big(S_G/H_R(S_G)\Big)\otimes_RR'\cong S_{G\otimes_RR'}/H_{R'}(S_{G\otimes_RR'})\]
In particular, if $N>0$ such that $\pi^N$ kills $S_G/H_R(S_G)$, then $\pi^N$ kills 
\[ S_{G\otimes_RR'}/H_{R'}(S_{G\otimes_RR'})\]
by the above diagram. Since $G\otimes_RR'\cong\Isom(P,P')\otimes_RR'$, we have that $\pi^N$ also kills 
\[S_{\Isom(P,P')\otimes_RR'}/H_R(S_{\Isom(P,P')\otimes_RR'})\cong S_{G\otimes_RR'}/H_R(S_{G\otimes_RR'})\]
By the analogous diagram for $\Isom(P,P')$ as \ref{diagram-for-G}, $\pi^N$ also kills $S_{\Isom}/H_R(S_{\Isom})$. Thus $N\geq h_{\Isom}$. We simply take the minimal such $N$ to obtain $h_{G}=h_{\Isom}$. Likewise, the same $h$ holds for all inner forms of $G$. 
\end{proof}
Along the way we have also proven the following:
\begin{Coro}
$H_R(F_G/J_G)=H_R(F_{\mathrm{Isom}}/J_{\mathrm{Isom}})$. 
\end{Coro}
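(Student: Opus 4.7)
The Corollary is essentially a restatement of what was established along the way in Lemma \ref{affine-main-lemma}, and the plan is to make explicit the base-change identification used there.

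First, since $P$ and $P'$ are $G$-torsors on $\Spec R$, they are fppf-locally trivial, so there exists a faithfully flat $R$-algebra $R'$ such that $P(R') \neq \varnothing$ and $P'(R') \neq \varnothing$. Once $P$ and $P'$ are trivialized, the $\Isom$ scheme is identified (as a scheme over $R'$) with $G \times_R R'$, giving a canonical isomorphism of $R'$-algebras $S_{\Isom(P,P')} \otimes_R R' \cong S_G \otimes_R R'$. This is exactly the identification appealed to in Lemma \ref{affine-main-lemma} in the line ``$G \otimes_R R' \cong \Isom(P,P') \otimes_R R'$''.

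Next, I would apply Lemma \ref{HR-flat-base-change} to both $G$ and $\Isom(P,P')$ to get
\[ H_R(S_G) \otimes_R R' \cong H_{R'}(S_G \otimes_R R'), \qquad H_R(S_{\Isom(P,P')}) \otimes_R R' \cong H_{R'}(S_{\Isom(P,P')} \otimes_R R'). \]
Under the identification above, the two right-hand sides agree as ideals of the common ring $S_G \otimes_R R' \cong S_{\Isom(P,P')} \otimes_R R'$. Hence $H_R(S_G) \otimes_R R' = H_R(S_{\Isom(P,P')}) \otimes_R R'$ as ideals in this ring. By faithfully flat descent for ideals, this equality descends to the claimed equality $H_R(F_G/J_G) = H_R(F_{\Isom}/J_{\Isom})$ on $R$.

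The only subtlety — and the main point worth being careful about — is that the two ideals a priori sit in the distinct $R$-algebras $S_G$ and $S_{\Isom(P,P')}$, which are only identified after a faithfully flat base change. So the equality of the Corollary must be interpreted via the fppf-local scheme isomorphism $G \times_R R' \cong \Isom(P,P') \times_R R'$, or equivalently via the induced sheaves of ideals on a common fppf cover. Once this identification is fixed, the descent step is routine, and the Corollary is immediate from the presentation-independence of $H_R(\cdot)$ combined with Lemma \ref{HR-flat-base-change}.
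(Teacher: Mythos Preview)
Your proposal is correct and follows the same approach as the paper: the Corollary is stated there without a separate proof, simply as a byproduct of Lemma \ref{affine-main-lemma}, and your argument is exactly an unpacking of that lemma's proof (trivialize both torsors over a faithfully flat $R'$, apply Lemma \ref{HR-flat-base-change}, and identify the base-changed ideals). You are also right to flag the interpretive subtlety that the two ideals live in genuinely different $R$-algebras $S_G$ and $S_{\Isom(P,P')}$, so the literal equality only makes sense after the fppf-local identification---the paper leaves this implicit.
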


\begin{numberedparagraph}\label{quasi-affine-case}
In the case where $G$ is quasi-affine instead of affine, we can proceed similarly. We write
\[G=\bigcup\limits_{i=1}^m(\Spec S_i)\]
Let $\Oo_G$ be the structure sheaf of $G$. Let $\mathcal{H}_R(G)$ be the ideal sheaf given by $H_R(S_i)$ on each affine open $\Spec S_i$. Using the same argument as in Lemma \ref{HR-flat-base-change} on affines, one can see that the formation of the ideal sheaf $\mathcal{H}_R(-)$ commutes with flat base change as well. 
Consider the relative Spec
\[\underline{\Spec}_{\Oo_G}(\Oo_G/\mathcal{H}_R(G))\]
which is a closed subscheme of $G$. In particular, we also have
\begin{lem}Let $G,G'$ be fppf locally isomorphic. Then:\\ 
$\pi^N$ is zero on $\underline{\Spec}_{\Oo_G}(\Oo_G/\mathcal{H}_R(G))\Longleftrightarrow\pi^N$ is zero on $\underline{\Spec}_{\Oo_{G'}}(\Oo_{G'}/\mathcal{H}_R(G'))$
\end{lem}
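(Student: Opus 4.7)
The plan is to adapt the flat descent argument of Lemma \ref{affine-main-lemma} to the quasi-affine setting, working with the intrinsic ideal sheaf $\mathcal{H}_R(-)$ in place of the affine ideal $H_R(S)$. First, I would choose a faithfully flat, finitely presented $R$-algebra $R'$ together with an isomorphism $\varphi\colon G\otimes_R R'\xrightarrow{\sim} G'\otimes_R R'$ of $R'$-schemes, which exists by the hypothesis that $G$ and $G'$ are fppf locally isomorphic (after possibly refining the fppf cover to an affine one).

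Next, I would glue the affine base change isomorphisms supplied by Lemma \ref{HR-flat-base-change}: on each affine open $\Spec S_i\subset G$ we have $H_R(S_i)\otimes_R R'\cong H_{R'}(S_i\otimes_R R')$, and these identifications are canonical because $H_R(-)$ depends only on the algebra and not on its presentation. They therefore patch to an isomorphism of ideal sheaves
\[\mathcal{H}_R(G)\otimes_R R'\;\cong\;\mathcal{H}_{R'}(G\otimes_R R'),\]
and hence to an isomorphism of closed subschemes
\[\underline{\Spec}_{\Oo_G}\bigl(\Oo_G/\mathcal{H}_R(G)\bigr)\otimes_R R'\;\cong\;\underline{\Spec}_{\Oo_{G_{R'}}}\bigl(\Oo_{G_{R'}}/\mathcal{H}_{R'}(G_{R'})\bigr).\]
The analogous identification holds with $G$ replaced by $G'$, and $\varphi$ identifies the two right-hand sides.

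The condition that $\pi^N$ annihilate $\underline{\Spec}_{\Oo_G}(\Oo_G/\mathcal{H}_R(G))$ is then fppf local on $R$: multiplication by $\pi^N$ on the structure sheaf vanishes if and only if it vanishes after the faithfully flat base change $R\to R'$. Combined with the identifications in the previous paragraph, this yields the claimed equivalence, exactly as in the affine argument of Lemma \ref{affine-main-lemma}.

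The main obstacle is checking that the local ideals $H_R(S_i)$ patch to a well-defined ideal sheaf $\mathcal{H}_R(G)$ whose formation commutes with flat base change. The ingredients are already in place: $H_R(S)$ is intrinsic to $S$ by \cite[Lemma 5.4.2(iii)]{Gabber-Ramero}, the cotangent complex and the relevant $\Ext^1$ localize along open immersions, and flat base change for $H_R(-)$ is supplied by Lemma \ref{HR-flat-base-change}. Hence the identifications on overlaps agree and the resulting descent data matches, so the argument goes through.
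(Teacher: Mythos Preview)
Your proposal is correct and follows essentially the same approach as the paper. The paper's own proof is a single sentence---``work Zariski locally to reduce to the affine case, which is proven in Lemma \ref{affine-main-lemma}''---and what you have written is precisely the unpacking of that sentence: you glue the affine invariants $H_R(S_i)$ into the sheaf $\mathcal{H}_R(G)$, invoke Lemma \ref{HR-flat-base-change} on each piece to get compatibility with the flat base change $R\to R'$, and then run the faithfully-flat descent step from Lemma \ref{affine-main-lemma} at the level of the structure sheaf.
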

\begin{proof}
One can 
work Zariski locally to reduce to the affine case, which is proven in Lemma \ref{affine-main-lemma}. 
\end{proof}
\begin{remark}
When $G$ is quasi-affine, $\Isom(P,P')$ is representable by a quasi-affine scheme as well. This isn't necessarily true for groups $G$ that are not quasi-affine.
\end{remark}
\end{numberedparagraph}

\begin{remark}\label{algebraic-spaces-cotangent-complexes-remark}
We can relax the conditions on $G$ even further, and let $G$ be any flat, finite type, generically smooth group scheme. Now $\Isom(P,P')$ is an algebraic space. Cotangent complexes for algebraic spaces are defined in \cite{Laumon-Moret-Bailly}. One can then check, via a similar argument as \ref{HR-flat-base-change}, that the formation of $\mathcal{H}_R(-)$ in the category of algebraic spaces commutes with \'etale localization.
\end{remark}

\section{The global case}
\begin{prop}\label{finite-fibre-OKS-to-K}
Let $K$ be a number field, $S$ a finite set of places of $K$, and $G$ a connected reductive group over $\Oo_{K,S}$. Then the fibres of the map
\[H^1(\Spec\Oo_{K,S},G)\to H^1(K,G)\]
are finite.
\end{prop}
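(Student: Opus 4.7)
The plan is to reduce the statement to finiteness of a single kernel via non-abelian twisting, and then to identify that kernel with the class set of $G$, whose finiteness is classical.

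I would first twist. For any $\alpha \in H^1(K,G)$ lying in the image of the map (otherwise the fibre is trivially finite, being empty), fix a representative $[P_\alpha] \in H^1(\Oo_{K,S}, G)$ with $[P_{\alpha,K}] = \alpha$ and form the inner form $G^\alpha := \Aut_G(P_\alpha)$ over $\Oo_{K,S}$. Since inner twists of connected reductive group schemes are again connected reductive, $G^\alpha$ satisfies the hypothesis of the proposition. The standard non-abelian torsor bijection identifies the fibre over $\alpha$ with
\[ \ker\!\big[\,H^1(\Oo_{K,S}, G^\alpha) \longrightarrow H^1(K, G^\alpha)\,\big]. \]
So it suffices to prove: for every connected reductive group scheme $G$ over $\Oo_{K,S}$, the kernel $\ker[H^1(\Oo_{K,S}, G) \to H^1(K, G)]$ is finite.

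Next I would use an \'etale/Nisnevich local-to-global argument. A class in the kernel is represented by a $G$-torsor $P$ that becomes trivial generically; a trivialization of $P_K$ spreads out to a trivialization over some $\Spec\Oo_{K,S'}$ with $S' \supset S$ finite, and $P$ is then determined by this generic trivialization together with its restrictions to the Henselizations $\Oo_v^h$ for $v \in S' \setminus S$ (either via Nisnevich excision, which applies because $G$ is smooth, or via a direct Beauville--Laszlo-type patching of torsors on the Dedekind scheme $\Spec\Oo_{K,S}$). Moreover, because $G$ is reductive over $\Oo_{K,S}$, at every $v \notin S$ the base change $G_{\Oo_v^h}$ is smooth with connected reductive special fibre, so Hensel's lemma yields $H^1(\Oo_v^h, G) = H^1(\kappa(v), G_{\kappa(v)})$, which vanishes by Lang's theorem over the finite field $\kappa(v)$. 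Putting this together, the kernel is identified with the double coset space
\[ G(K) \;\big\backslash\; {\prod_{v \notin S}}'\, G(K_v^h) \;\big/\; \prod_{v \notin S} G(\Oo_v^h), \]
where the primed product is the restricted direct product with respect to $G(\Oo_v^h)$. This is precisely the class set of the reductive group scheme $G$ relative to $S$.

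The main obstacle --- and the only genuinely non-formal input --- is then the finiteness of this class set, which is the Borel--Serre finiteness theorem for class numbers of connected reductive groups over number fields, proved via reduction theory. Invoking it would finish the proof; the hardest technical point along the way is to make the patching/excision identification precise for a non-abelian sheaf, but this is standard once $G$ is known to be smooth affine.
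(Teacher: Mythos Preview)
Your proposal is correct and follows essentially the same approach as the paper: both use Lang's theorem at places outside $S$, glue/patch to identify the relevant set with a class-set double quotient, and then invoke the Borel--Harish-Chandra/Borel--Serre finiteness of class numbers. The only differences are cosmetic: the paper treats an arbitrary fibre directly (surjecting $G(\Oo_{K,S})\backslash G(\A_f^S)/\prod_{v\notin S} G(\Oo_{K_v})$ onto it) rather than first twisting to reduce to the kernel, and it works with completions rather than Henselizations.
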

\begin{proof}
Suppose $P,P'$ are $G$-torsors over $\Oo_{K,S}$ such that $P|_K\xrightarrow{\sim} P'|_K$. 
For any prime $v\notin S$ of $K$, we denote $P_v:=P|_{\Oo_{K_v}}$ and $P'_v:=P'|_{\Oo_{K_v}}$. 
By Lang's Lemma, such $P_v$ and $P'_v$ are trivial, and in particular they're isomorphic. 
The isomorphism $P|_K\xrightarrow{\sim}P'|_{K}$ extends over $\Spec\Oo_{K,T}$ for some finite set of primes $T\supset S$.\\
For $v\notin S$ there are tautological ismorphisms $P_v|_{K_v}\xrightarrow{\sim} P|_{K_v}$, 
and $P'$ can be constructed by modifying these isomorphisms by an element of $g_v\in G(K_v)$ for $v\in T-S$ and gluing the $P_v$ to $P|_{\Oo_{K,T}}$ along the resulting isomorphisms. If $g_v\in G(\Oo_{K_v})$ for $v\in T-S$, or all the $g_v$ are equal to a single element $g\in G(\Oo_{K,T})$, then $P$ and $P'$ are isomorphic. 

Thus one sees that there is a surjection from the set 
\begin{equation}\label{9.1}
    \lim\limits_{T}G(\Oo_{K,S})\backslash \prod\limits_{v\in T-S}G(K_v)/G(\Oo_{K_v})=G(\Oo_{K,S})\backslash G(\A_f^S)/\prod\limits_{v\notin S}G(K_v)
\end{equation}
to any fibre of the map $H^1(\Spec\Oo_{K,S},G)\to H^1(K,G)$. Here $\A_f^S$ denotes the adeles of $K$ with trivial components at primes in $S$. (In fact it is not hard to see that this map is a bijection.)

By a result of Borel and Harish-Chandra \cite{Borel-Harish-Chandra}, the set in \ref{9.1} is finite, and thus the map $H^1(\Spec\Oo_{K,S},G)\to H^1(K,G)$ has finite fibres.
\end{proof}

Now we can combine all the above results and prove the following. 
\begin{prop}\label{finiteness-global-OKS}
Let $K$ be a number field, $S$ a finite set of places of $K$, and $G$ a smooth group over $\Oo_{K,S}$ such that the connected component of the identity $G^0\subset G$ is reductive, and $G/G^0$ is \'etale. Then $H^1(\Spec\Oo_{K,S},G)$ is finite.
\end{prop}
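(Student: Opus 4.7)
The approach is a dévissage via the short exact sequence
\[1 \to G^0 \to G \to G/G^0 \to 1\]
of $\Oo_{K,S}$-group schemes, which produces a pointed exact sequence of sets
\[H^1(\Spec\Oo_{K,S}, G^0) \to H^1(\Spec\Oo_{K,S}, G) \to H^1(\Spec\Oo_{K,S}, G/G^0).\]
It suffices to show both that the image of the second map is finite and that each of its nonempty fibers is finite.

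For the target, since $G$ is of finite type and $G/G^0$ is étale, $G/G^0$ is a finite étale $\Oo_{K,S}$-group scheme, and $(G/G^0)$-torsors correspond to finite étale covers of $\Spec\Oo_{K,S}$ of bounded degree equipped with additional structure. By the Hermite--Minkowski theorem there are only finitely many such covers, so $H^1(\Spec\Oo_{K,S}, G/G^0)$ is finite. For each class $[\bar{P}]$ in the image, I would choose a lift $[P] \in H^1(\Spec\Oo_{K,S}, G)$ and apply the standard twisting argument in nonabelian cohomology, which identifies the fiber over $[\bar{P}]$ with the image of $H^1(\Spec\Oo_{K,S}, {}^{P}G^0) \to H^1(\Spec\Oo_{K,S}, {}^{P}G) \cong H^1(\Spec\Oo_{K,S}, G)$, where ${}^{P}G^0$ denotes the twist of $G^0$ by the $G$-torsor $P$ under the conjugation action of $G$ on its normal subgroup $G^0$. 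Since twisting preserves smoothness and the property of being connected reductive, ${}^{P}G^0$ is again a connected reductive $\Oo_{K,S}$-group scheme; Proposition~\ref{finite-fibre-OKS-to-K} then gives that the map $H^1(\Spec\Oo_{K,S}, {}^{P}G^0) \to H^1(K, {}^{P}G^0_K)$ has finite fibers, and the classical Borel--Serre theorem implies $H^1(K, H)$ is finite for any connected reductive $H$ over a number field, so $H^1(\Spec\Oo_{K,S}, {}^{P}G^0)$ is finite. Hence each fiber is finite.

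The main obstacle I anticipate is making the twisting step precise in the nonabelian setting: one must verify the standard fiber-as-image description for $H^1(G) \to H^1(G/G^0)$, and confirm that the outer/inner twist ${}^{P}G^0$ remains a smooth connected reductive group scheme over $\Oo_{K,S}$ (not merely over the generic fiber). Once these foundational points are in hand, the finiteness assembles cleanly from the Hermite--Minkowski input for the component group, Proposition~\ref{finite-fibre-OKS-to-K} for the local-to-integer reduction, and the Borel--Serre theorem over the number field.
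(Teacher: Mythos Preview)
Your d\'evissage via $1\to G^0\to G\to G/G^0\to 1$ and the twisting argument for the fibers are correct and match the paper's structure. The genuine gap is in your treatment of the connected reductive piece: the assertion that ``the classical Borel--Serre theorem implies $H^1(K,H)$ is finite for any connected reductive $H$ over a number field'' is false. Borel--Serre proves that $H^1(K,H)\to\prod_{v}H^1(K_v,H)$ has finite fibers, not that $H^1(K,H)$ itself is finite. Counterexamples are easy: for the norm-one torus $T=R^1_{L/K}\mathbb{G}_m$ one has $H^1(K,T)\cong K^{\times}/N_{L/K}(L^{\times})$, which is infinite; likewise $H^1(K,\PGL_n)$ classifies degree-$n$ central simple algebras and is infinite. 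So your chain ``finite fibers of $H^1(\Oo_{K,S},{}^{P}G^0)\to H^1(K,{}^{P}G^0_K)$'' plus ``$H^1(K,{}^{P}G^0_K)$ finite'' breaks at the second step.

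The paper circumvents this by a further d\'evissage \emph{inside} the connected reductive case. After using Proposition~\ref{finite-fibre-OKS-to-K} to enlarge $S$, it splits into (I) tori and (II) simply connected semisimple groups. For tori it never passes to $H^1(K,T)$: instead it computes $H^1(\Oo_{K,S},T)\cong H^1(\Gal(L/K),T(\Oo_{L,S}))$ directly, and this is finite because $\Oo_{L,S}^{\times}$ is a finitely generated abelian group (Dirichlet). For $G=G^{\sc}$ simply connected, the Hasse principle genuinely gives finiteness of $H^1(K,G)$ (it injects into the finite product over archimedean places), and then Proposition~\ref{finite-fibre-OKS-to-K} finishes. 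The general reductive case is assembled from these two via the isogeny $G^{\sc}\to G^{\der}$ and the quotient $G\to G^{\ab}$. Your argument would be repaired by inserting exactly this second layer of d\'evissage in place of the appeal to Borel--Serre.
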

\begin{proof}
If $G$ is finite \'etale, by the global Poitou-Tate duality, we have $H^i(\Spec\Oo_{K,S},G)$ is finite for $i=0,1,2$. 
Thus by d\'evissage 
it suffices to consider the case when $G$ is connected reductive, which we now assume. 
Note that by Lemma \ref{finite-fibre-OKS-to-K}, we may replace $S$ by a larger finite set $S'$. 
Thus we may assume that the simply connected cover $G^{\sc}$ of the derived group $G^{\der}$ of $G$ is \'etale over $G^{\sc}$ and hence of order invertible on $\Spec\Oo_{K,S}$. We may further assume that the abelianization $G^{\ab}$ splits over a field which is unramified at $v\notin S$. 

We now consider a number of cases. 

(I) Suppose $G$ is a torus. Let $L/K$ be a finite extension over which $G$ splits. Our assumptions imply that we may take $L/K$ to be unramified at primes $v\notin S$. 
By Hilbert Theorem 90 and our assumption on $G^{\ab}$, we have 
\[H^1(\Spec\Oo_{K,S},G)=H^1(\Gal(L/K),G(\Oo_{L,S})).\]

As an abelian group $G(\Oo_{L,S})$ is a product of copies of $\Oo_{L,S}^{\times}$. In particular, this is a finitely generated abelian group. 
Hence $H^1(\Gal(L/K),G(\Oo_{L,S}))$ is finite; it is a finitely generated abelian group killed by the order of $\Gal(L/K)$.

(II) Suppose $G=G^{\sc}$. Then by Lemma \ref{finite-fibre-OKS-to-K}, it suffices to prove that $H^1(K,G)$ is finite, which follows from the Hasse principle.

This, together with the remark above when $G$ is finite, implies the lemma when $G$ is simply connected.

Finally, combining this with the case of a torus implies the Lemma in general. 
\end{proof}

\begin{theorem}\label{finiteness-OKS-flat-gp}
Let $K$ be a number field, $S$ a finite set of places of $K$, and $G$ a flat group scheme of finite type over $\Oo_{K,S}$ such that the connected component of identity in the generic fibre of $G$ is reductive. Then the set $H^1(\Spec\Oo_{K,S},G)$ is finite. 
\end{theorem}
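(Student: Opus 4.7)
The plan is to reduce the theorem to Proposition~\ref{finiteness-global-OKS} by spreading out, and then to control the finitely many extra contributions via the local Corollary~\ref{local-fppf-finiteness}.

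First I would observe that the generic fibre $G_K$ is smooth: its identity component is reductive by hypothesis, and since $\Char K=0$ the quotient $G_K/(G_K)^0$ is finite étale, so $G_K$ itself is smooth. By spreading out, there exists a finite set of places $S'\supseteq S$ such that over $\Spec\Oo_{K,S'}$ the group scheme $G$ is smooth, its identity component $G^0$ is reductive, and $G/G^0$ is étale. Proposition~\ref{finiteness-global-OKS} then yields finiteness of $H^1(\Spec\Oo_{K,S'},G)$.

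Next I would show that the restriction map
\[r\colon H^1(\Spec\Oo_{K,S},G)\longrightarrow H^1(\Spec\Oo_{K,S'},G)\]
has finite fibres. For each place $v\in S'-S$, the henselization $\Oo_{K_v}^h$ is a Henselian DVR with finite residue field, and $G$ pulled back to $\Oo_{K_v}^h$ is flat of finite type with smooth generic fibre; Corollary~\ref{local-fppf-finiteness} then gives that $H^1_{\fppf}(\Spec\Oo_{K_v}^h,G)$ is finite. A standard twisting argument reduces bounding an arbitrary fibre of $r$ to bounding the fibre over the trivial class. A $G$-torsor on $\Spec\Oo_{K,S}$ which becomes trivial over $\Spec\Oo_{K,S'}$ is obtained, via descent along the faithfully flat covering $\Spec\Oo_{K,S'}\sqcup\bigsqcup_{v\in S'-S}\Spec\Oo_{K_v}^h\to\Spec\Oo_{K,S}$ (Beauville--Laszlo style patching), by gluing the trivial torsor on $\Spec\Oo_{K,S'}$ with torsors on the $\Spec\Oo_{K_v}^h$. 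Hence the fibre injects into the finite product $\prod_{v\in S'-S}H^1_{\fppf}(\Spec\Oo_{K_v}^h,G)$, which is finite, so each fibre of $r$ is finite and $H^1(\Spec\Oo_{K,S},G)$ is finite.

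The main obstacle is making precise the patching step that bounds the fibres of $r$: for general flat group schemes of finite type (not necessarily affine) some care is needed to justify the Beauville--Laszlo type excision for fppf torsors. Once this is in hand, the remaining ingredients---spreading out, the twisting trick to reduce to the trivial fibre, and invocation of the already established finiteness results (Proposition~\ref{finiteness-global-OKS} globally and Corollary~\ref{local-fppf-finiteness} locally)---combine to give the theorem.
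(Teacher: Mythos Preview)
Your overall architecture matches the paper's proof: enlarge $S$ to some $S'$ (the paper calls it $T$) where $G$ becomes smooth with reductive identity component, invoke Proposition~\ref{finiteness-global-OKS} over $\Oo_{K,S'}$, and control the passage from $S$ to $S'$ using the local finiteness of Corollary~\ref{local-fppf-finiteness}. The paper does this by mapping into the product $H^1(\Oo_{K,T},G)\times\prod_{v\in T\setminus S}H^1(\Oo_{K_v},G)$ and showing that \emph{this} map has finite fibres, reusing the adelic double--coset argument of Proposition~\ref{finite-fibre-OKS-to-K}.

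The gap in your version is the sentence ``Hence the fibre injects into the finite product $\prod_{v\in S'\setminus S}H^1_{\fppf}(\Spec\Oo_{K_v}^h,G)$.'' This is false in general, and Beauville--Laszlo patching does not give it. Already for $G=\G_m$ one has $H^1(\Oo_{K_v}^h,\G_m)=\Pic(\Oo_{K_v}^h)=0$, while the kernel of $\Pic(\Oo_{K,S})\to\Pic(\Oo_{K,S'})$ is the subgroup generated by the classes of the primes in $S'\setminus S$, which is typically nontrivial. What patching actually gives is that, after fixing a trivialisation over $\Oo_{K,S'}$, a class in $\ker(r)$ is determined by torsors on the $\Oo_{K_v}^h$ together with gluing data over the $K_v^h$; the residual ambiguity is exactly an adelic double coset
\[
\Aut_G(P)(\Oo_{K,S'})\ \backslash\ \prod_{v\in S'\setminus S}\Aut_G(P)(K_v)\ /\ \prod_{v\in S'\setminus S}\Aut_G(P)(\Oo_{K_v}^h),
\]
and it is the finiteness of \emph{this} set (via Borel--Harish-Chandra, as in Proposition~\ref{finite-fibre-OKS-to-K}) that is the missing ingredient. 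Once you insert that step, your argument becomes the paper's: the map from $H^1(\Oo_{K,S},G)$ to $H^1(\Oo_{K,S'},G)\times\prod_v H^1(\Oo_{K_v},G)$ has finite target \emph{and} finite fibres.
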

\begin{proof}
Let $T\supset S$ 
be a finite set of places of $K$ such that $G|_{\Oo_{K,T}}$ satisfies the conditions of Proposition \ref{finiteness-global-OKS}. We consider the map
\begin{equation}\label{Prop-11-map}
    H^1(\Spec\Oo_{K,S},G)\to H^1(\Spec\Oo_{K,T},G)\times\prod\limits_{v\in T-S}H^1(\Oo_{K_v},G).
\end{equation}
By Proposition \ref{finiteness-global-OKS}, 
$H^1(\Spec\Oo_{K,T},G)$ is finite. And by Proposition \ref{local-fppf-finiteness}, each $H^1(\Oo_{K_v},G)$ is finite, and thus $\prod\limits_{v\in T-S}H^1(\Oo_{K_v},G)$ as a finite product of finite sets is clearly finite. Thus the RHS of the above map (\ref{Prop-11-map}) is finite. Thus in order to show that $H^1(\Spec\Oo_{K,S}, G)$ is finite, it suffices to show that the above map (\ref{Prop-11-map}) has finite fibres. This can be shown using exactly the same argument as in Lemma \ref{finite-fibre-OKS-to-K}--the only difference is that $P$ and $P'$ are isomorphic over $\Oo_{K_v}$ for $v\in T-S$, hence trivial. 
\end{proof}

\section{An application to integral models of Shimura varieties of PEL type}\label{PEL-application-section}

From now on, we fix a PEL datum, i.e. we fix a semisimple $\Q$-algebra $B$ with a maximal order $\Oo_B$. The algebra $B$ is endowed with a positive involution $*$ and $\Oo_B$ is stable under the involution $*$. 
Let $(\mcA,\lambda,\iota)$ be an abelian scheme with $\Oo_B$-action $\iota$ and a symmetric $\Oo_B$-linear polarization $\lambda$. 

Let $\underline{\Aut}(\mcA)$ be the $\Z$-group 
whose points in a $\Z$-algebra $R$ are given by 
$\underline{\Aut}(\mcA)(R)=(\End_S\mcA\otimes_{\Z}R)^{\times}$. 
Let $L$ be the $\Z$-scheme of linear maps $\Oo_B\to \End_S\mcA$, i.e. we have 
$L:=\underline{\Hom}_{\Z}(\Oo_B,\End_S\mcA)$, 
whose $R$-points (for any $\Z$-algebra $R$) 
are given by 
\[L(R):=\underline{\Hom}_{\Z\otimes R}(\Oo_B\otimes_{\Z}R,\End_S\mcA\otimes_{\Z}R)\cong \underline{\Hom}_{\Z}(\Oo_B,\End_S\mcA)\otimes_{\Z}R\]
Now, for a fixed endomorphism structure $\iota:\Oo_B\to \End_S(\mcA)$, 
we have $\iota\in L(\Z)$.

For any $b\in \underline{\Aut}(\mcA)(R)$, we can define an element $\iota_b\in L(R)$ given by 
\[\iota_b(a)=b\iota(a)b^{-1}\quad\mbox{ for any }a\in\Oo_B\otimes_{\Z}R.\]
(Note that this construction is inspired by explicit computations of matrices.)

This gives us a map of $\Z$-schemes $\psi: \underline{\Aut}(\mcA)\to L$, 
which on the level of $R$-points is given by 
\begin{align*}
\psi(R): \underline{\Aut}(\mcA)(R)&\to L(R)\\
b&\mapsto \iota_b
\end{align*}
Next we construct a subgroup scheme $H$ of $\underline{\Aut}(\mcA)$. 
We start with our fixed $\iota: \Oo_B\to \End_S(\mcA)$. 
Denote by $B'$ the commutant of $(\iota\otimes\Q)(B)$ in $\End_S\mcA\otimes\Q$. 
We also denote $\Oo_{B'}:=B'\cap \End_S\mcA$.  
We define an algebraic group $H$ over $\Z$, whose $R$-points (for $\Z$-algebra $R$) 
are given by
$H(R):=(\Oo_{B'}\otimes_{\Z}R)^{\times}=((B'\cap \End_S\mcA)\otimes_{\Z}R)^{\times}$. 
Note that the group $H$ is smooth, with connected geometric fibres. 
To see this, denote $\rank_{\Z}\Oo_{B'}=n$. Consider the affine space $\A^n$ over $\Z$ corresponding to $\Oo_{B'}$. We can embed $H\subset \A^n$ (this can be seen explicitly by writing down the equations of $H$).
Thus $H$ is open in the affine space $\A^n$. 
Thus $H$ is smooth with connected geometric fibres. 
The generic fibre $H_{\Q}$ is reductive, and so $H_{\Z[1/N]}$ is reductive for some integer $N$.

Note that the semisimple algebra $\End_S(\mcA)$ is equipped with the Rosatti involution $\dagger$, 
and $\Oo_{B'}$ inherits an involution from $\End_S(\mcA)$. 
For any involution $*$ of $\Oo_{B'}$, we denote by $\mathcal{O}_{B'}^*\subset\Oo_{B'}$ the subgroup fixed by $*$. 
Note that $\Oo_{B'}^*$ is a saturated subgroup of $\Oo_{B'}$. 
We define
$H^*:=H\cap\Oo_{B'}^*$, 
where the intersection is the scheme-theoretic intersection. 
In particular, when we take the specific involution $*=\dagger$ (the Rosati involution), we obtain the $\Z$-scheme 
$H^{\dagger}:=H\cap \Oo_{B'}^{\dagger}$. 
We also denote by $\Aut(\mcA)^{\dagger}$ the part of $\Aut(\mcA)$ fixed by $\dagger$. ($\Aut(\mcA)^{\dagger}$ is also viewed as a $\Z$-scheme.) 
Note that $H^{\dagger}$ is a sub-scheme of $\Aut(\mcA)^{\dagger}$. 
Since $H\subset\underline{\Aut}(\mcA)$ commutes with the image of $\iota$, this induces a map $\underline{\Aut}(\mcA)/H\to L$, identifying $\underline{\Aut}(\mcA)/H$ with a locally closed subscheme of $L$.

Let $I\subset\Aut(\mcA)$ be the algebraic group over $\Z$ consisting of elements $m$ with $mm^{\dagger}=1$. 
By positivity of the Rosatti involution, 
$I$ is compact, i.e. $I(\R)$ is compact. 
We define a map of $\Z$-schemes 
    $\wp: \Aut(\mcA)\to\Aut(\mcA)^{\dagger}$ given by $m\mapsto mm^{\dagger}$ 
and we denote by $\tilde{H}\subset \Aut(A)$ the preimage of $H^{\dagger}$ under this map.
By Definition, 
$I=\ker\wp\subset \Aut(\mcA)$. 
In fact, 
$I\subset \tilde{H}$.

Now we recall some facts about torsors. For a scheme $T$ and a group scheme $G$ over $T$, we denote by $H^1(G,T)$ the set of isomorphism classes of $G$-torsors on $T$. \\

\begin{lem}\label{Lemma 5}
Let $G$ be a flat group scheme of finite type over a scheme $T$ and $G'\subset G$ a closed, flat subgroup. Then there is a sequence of maps
\[G(T)\to G/G'(T)\xrightarrow{\delta}H^1(T,G')\]
where if $x,x'\in G/G'(T)$ then $\delta(x)=\delta(x')$ if and only if there exists a $g\in G(T)$ with $g\cdot x=x'$. 
\end{lem}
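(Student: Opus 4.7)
The plan is to realize $\delta$ as the standard connecting map attached to the short exact sequence $1 \to G' \to G \to G/G' \to 1$ of fppf sheaves on $T$, and then to verify the asserted characterization of its fibers. Since $G' \subset G$ is closed and flat, the fppf quotient sheaf $G/G'$ exists and the projection $G \to G/G'$ is a right $G'$-torsor for the fppf topology; representability of $G/G'$ as a scheme is not needed, because the statement only refers to $T$-points of $G/G'$. The first map $G(T) \to (G/G')(T)$ is simply the one induced by this projection.

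For the definition of $\delta$, given $x \in (G/G')(T)$ viewed as a morphism $x \colon T \to G/G'$ of fppf sheaves, I would set
\[ P_x := G \times_{G/G',\, x} T, \]
and put $\delta(x) := [P_x] \in H^1(T,G')$. Since torsors pull back to torsors, $P_x$ is an fppf $G'$-torsor on $T$, so this is well defined.

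For the characterization of fibers, the easy direction is that if $x' = g \cdot x$ with $g \in G(T)$, then left translation $L_g \colon G \to G$ is right $G'$-equivariant and carries the fiber over $x$ isomorphically to the fiber over $x'$, yielding an isomorphism $P_x \xrightarrow{\sim} P_{x'}$ and hence $\delta(x) = \delta(x')$. For the converse, suppose $\phi \colon P_x \xrightarrow{\sim} P_{x'}$ is an isomorphism of right $G'$-torsors. Let $\alpha \colon P_x \to G$ be the natural projection and set $\beta := \pr_G \circ \phi \colon P_x \to G$. Both $\alpha$ and $\beta$ are $G'$-equivariant for the right action, so the morphism $f \colon P_x \to G$, $p \mapsto \beta(p)\alpha(p)^{-1}$, is right $G'$-invariant. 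Because $P_x \to T$ is the fppf $G'$-quotient, fppf descent produces a unique $g \colon T \to G$, i.e. $g \in G(T)$, satisfying $g \cdot \alpha(p) = \beta(p)$; projecting to $G/G'$ then yields $g \cdot x = x'$.

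The argument is essentially bookkeeping once the right framework is in place: the only step requiring care is checking that $G \to G/G'$ really is a $G'$-torsor in the fppf topology (which uses both closedness and flatness of $G'$ in $G$) and that the invariant morphism $f$ descends along the fppf cover $P_x \to T$. Neither presents a serious obstacle.
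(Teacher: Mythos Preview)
Your proposal is correct and follows essentially the same approach as the paper. Both define $\delta(x)$ as the class of the fiber $P_x = G \times_{G/G',\,x} T$ of $G \to G/G'$ over $x$; the only cosmetic difference is in the converse direction, where the paper identifies the transporter $\{\,y \in G : y\cdot x = x'\,\}$ with $\underline{\Isom}(P_x,P_{x'})$ and reads off the result from triviality of that torsor, whereas you unwind the same correspondence explicitly by descending the $G'$-invariant map $p \mapsto \beta(p)\alpha(p)^{-1}$ along the fppf cover $P_x \to T$.
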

\begin{proof}
For $x\in G/G'(T)$ the element $\delta(x)$ is the class of the $G$-torsor given by $F_x$, the fibre over $x$ of the map $G\to G/G'$.\\
If $x,x'\in G/G'(T)$, we consider the subscheme of $G$ whose points in a $T$-scheme $S$ are given by the set of $y\in G(S)$ such that $y\cdot x=x'$. This subscheme is a $G$-torsor, isomorphic to the scheme of isomorphisms $\underline{\Isom}(F_x,F_x')$. In particular, $F_x$ and $F_x'$ are isomorphic if and only if this $G$-torsor is trivial; that is, if and  only if $x$ and $x'$ differ by a point of $G(T)$.
\end{proof}

Here we recall the following corollary of the Noether-Skolem Theorem
\begin{lem}\label{Noether-Skolem}
For any two $k$-algebra homomorphisms $f,g$ from a simple $k$-algebra $A$ to a central simple $k$-algebra $B$, there exists $b\in B^{\times}$ such that $f(x)=bg(x)b^{-1}$ for all $x\in A$.
\end{lem}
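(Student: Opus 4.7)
The plan is to give the classical module-theoretic proof of Noether--Skolem. The first step is to endow $B$ with two different left $A\otimes_k B^{\mathrm{op}}$-module structures: denote by $B_f$ (resp.\ $B_g$) the $k$-vector space $B$ with left $A$-action $a\cdot x := f(a)x$ (resp.\ $a\cdot x := g(a)x$), the right $B$-action being ordinary right multiplication in both. Producing the desired $b$ amounts to producing an isomorphism $B_f\xrightarrow{\sim} B_g$ of such modules.

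The second step is to verify that $B_f\cong B_g$. Since $A$ is a simple $k$-algebra and $B^{\mathrm{op}}$ is a central simple $k$-algebra, the tensor product $A\otimes_k B^{\mathrm{op}}$ is itself a simple $k$-algebra, by the standard fact that tensoring with a central simple algebra preserves simplicity. Over a simple artinian $k$-algebra, any two finitely generated modules of the same finite $k$-dimension are isomorphic, since each decomposes into the same number of copies of the unique simple module. As $\dim_k B_f=\dim_k B_g=\dim_k B$, this produces an $A\otimes_k B^{\mathrm{op}}$-linear isomorphism $\phi\colon B_f\xrightarrow{\sim} B_g$.

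The third step is to read off the conjugating element from $\phi$. Right $B$-linearity forces $\phi(y)=\phi(1)\,y=cy$ for all $y\in B$, where $c:=\phi(1)\in B$, and bijectivity of $\phi$ forces $c\in B^{\times}$. Evaluating left $A$-linearity of $\phi$ at $1\in B$, for every $a\in A$ we obtain
\[
cf(a)=\phi(f(a))=g(a)\,\phi(1)=g(a)c,
\]
hence $f(a)=c^{-1}g(a)c$, and setting $b:=c^{-1}\in B^{\times}$ yields the desired identity $f(x)=bg(x)b^{-1}$ for all $x\in A$.

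The only genuinely nontrivial ingredient, and what I expect to be the main step to check carefully, is the simplicity of $A\otimes_k B^{\mathrm{op}}$; this is precisely where the centrality of $B$ over $k$ enters, and without it one obtains only a product of simple algebras and loses the uniqueness-up-to-dimension classification of modules. Finite-dimensionality of $A$ over $k$ is implicit in the statement and is needed to compare $k$-dimensions; the alternative is to phrase the isomorphism in terms of length in the category of $A\otimes_k B^{\mathrm{op}}$-modules.
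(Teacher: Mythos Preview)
Your argument is the standard module-theoretic proof of the Noether--Skolem theorem and is correct. One small remark: finite-dimensionality of $A$ over $k$ is in fact automatic here, since a unital $k$-algebra homomorphism out of the simple algebra $A$ is injective, and $B$ is finite-dimensional; so your caveat at the end is harmless but unnecessary.

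As for comparison with the paper: there is nothing to compare. The paper does not prove this lemma at all; it is stated without proof, introduced only by the phrase ``Here we recall the following corollary of the Noether--Skolem Theorem.'' Your write-up simply supplies the classical argument that the paper takes for granted.
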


We will also need the following easy observation: 
For any $\iota:\Oo_B\hookrightarrow\End(\mcA)$ given in the PEL Shimura data, the image of $\iota$ lies in $\End(\mcA)^{\dagger}$, where $\dagger$ is the Rosati involution induced from the $\Oo_B$-linear polarization $\lambda$ in our Shimura datum. (i.e. the image of $\iota$ is stable under $\dagger$, by which we mean
\[\iota(a)^{\dagger}=\iota(a)\qquad\mbox{for all }a\in\Oo_B. )\]
Moreover, $\dagger$ induces an anti-involution on $\Oo_B$ via $\iota$.

\begin{theorem}\label{main_PEL_thm}
The morphism on the level of integral models 
\[\Phi:\mathscr{S}_K(G,X)\to\mathscr{S}_{K'}(\GSp,S^{\pm})\]
induced (on the level of points) by $(\mcA,\lambda,\iota)\mapsto (\mcA,\lambda)$ has finite fibres. 
\end{theorem}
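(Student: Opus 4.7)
The plan is to interpret the fibre of $\Phi$ over a point of the Siegel integral model as a suitable torsor-theoretic quotient, and then to invoke Theorem \ref{finiteness-OKS-flat-gp} applied to the commutant group $H$. Fix a base $T = \Spec \Oo_{K,S}$ (or a geometric point) and $(\mathcal{A}, \lambda) \in \mathscr{S}_{K'}(\GSp, S^{\pm})(T)$. The fibre of $\Phi$ over $(\mathcal{A}, \lambda)$ consists of PEL-compatible $\Oo_B$-structures $\iota': \Oo_B \hookrightarrow \End(\mathcal{A})$ (i.e. those satisfying $\iota'(a^*) = \iota'(a)^{\dagger}$) modulo the conjugation action of the polarization-preserving subgroup $I(T) \subset \underline{\Aut}(\mathcal{A})(T)$.

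Assuming the fibre is non-empty, I would fix a reference $\iota$ in it. By Noether--Skolem (Lemma \ref{Noether-Skolem}) applied over the generic fibre, any other PEL-compatible $\iota'$ is of the form $\iota_b = b\iota b^{-1}$ for some $b \in \underline{\Aut}(\mathcal{A})$. A direct computation using $\iota(a)^{\dagger} = \iota(a^*)$ shows that $\iota_b$ is again PEL-compatible iff $bb^{\dagger}$ commutes with the image of $\iota$, i.e.\ iff $b \in \tilde{H}$. Thus the fibre is identified with a subquotient of $(\tilde{H}/H)(T)$ by the conjugation action of $I(T)$, where $\tilde{H}/H \hookrightarrow L$ is the locally closed orbit of $\iota$.

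I would then apply Lemma \ref{Lemma 5} to the closed, flat inclusion $H \subset \tilde{H}$, obtaining the coboundary map
\[
\tilde{H}(T) \to (\tilde{H}/H)(T) \xrightarrow{\delta} H^1(T, H)
\]
whose fibres are exactly the $\tilde{H}(T)$-orbits. Hence $(\tilde{H}/H)(T)/\tilde{H}(T)$ injects into $H^1(T, H)$. The generic fibre of $H$, being the commutant in $\underline{\Aut}(\mathcal{A}) \otimes \Q$ of a semisimple subalgebra (as recorded in the paragraphs introducing $H$), is reductive, so Theorem \ref{finiteness-OKS-flat-gp} applies and gives that $H^1(\Spec \Oo_{K,S}, H)$ is finite, possibly after enlarging $S$ to ensure the flatness and finite-type hypotheses hold. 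It remains to refine $\tilde{H}(T)$-orbits to $I(T)$-orbits: since $I \subset \tilde{H}$ and $I(\R)$ is compact by positivity of the Rosati involution, each $\tilde{H}(T)$-orbit contains only finitely many $I(T)$-orbits, by a reduction-theoretic argument on the double coset space $I(T) \backslash \tilde{H}(T)/H(T)$ in the style of Proposition \ref{finite-fibre-OKS-to-K}.

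The main obstacle I anticipate is precisely this last refinement step, namely passing from finiteness of $(\tilde{H}/H)(T)/\tilde{H}(T)$ to finiteness of $(\tilde{H}/H)(T)/I(T)$: one has to combine the compactness of $I(\R)$ with a Borel--Harish-Chandra style finiteness for the double coset $I(T) \backslash \tilde{H}(T)/H(T)$, and carefully control how the fppf-torsor classes in $H^1(T, H)$ that might not come from $\tilde{H}(T)$ interact with the $I$-action. A secondary, essentially routine, technical point is the spreading out needed so that $H$, $\tilde{H}$, and $I$ are all flat of finite type over $\Oo_{K,S}$ with the expected generic fibres.
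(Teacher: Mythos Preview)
Your opening reduction via Noether--Skolem, the identification of the relevant $b$ as lying in $\tilde H$, and the idea of invoking Lemma~\ref{Lemma 5} together with Theorem~\ref{finiteness-OKS-flat-gp} all match the paper's Step~I. Two points, however, need correction.

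First, a slip: you apply Lemma~\ref{Lemma 5} to $H\subset\tilde H$, but $\tilde H=\wp^{-1}(H^{\dagger})$ is \emph{not} a group scheme (the map $m\mapsto mm^{\dagger}$ is not a homomorphism), so the lemma does not apply in that form. The paper instead applies Lemma~\ref{Lemma 5} to the honest inclusion $H\subset\underline{\Aut}(\mathcal A)$, obtaining only that $\underline{\Aut}(\mathcal A)(\Z)$ has finitely many orbits on $(\underline{\Aut}(\mathcal A)/H)(\Z)$; this by itself does not bound $\tilde H(\Z)/H(\Z)$.

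Second, and more seriously, the step you flag as ``the main obstacle'' is in fact where essentially all of the remaining content lies, and your proposed mechanism (Borel--Harish-Chandra, compactness of $I(\R)$) is not what carries it. The paper uses compactness of $I(\R)$ only trivially, to note that $I(\Z)$ is finite (compact and discrete), so that showing $\tilde H(\Z)/H(\Z)$ finite is equivalent to showing $I(\Z)\backslash\tilde H(\Z)/H(\Z)$ finite. The substantive argument then proceeds quite differently: one embeds $I\backslash\tilde H\hookrightarrow H^{\dagger}$ via $\wp$, reducing to finiteness of $H^{\dagger}(\Z)/H(\Z)$ under the twisted $H$-action $m\mapsto h^{\dagger}mh$. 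This is handled geometrically: the orbit map $\vartheta\colon H\times H^{\dagger}\to H^{\dagger}\times H^{\dagger}$ is shown to be smooth and surjective over $\Z[1/2]$ by a Lie-algebra dimension count on the stabilizers $H^{\dagger(m)}$; the prime $2$ is then treated by Raynaud--Gruson flattening via blow-ups and a Skolem-type lifting, reducing to finiteness of $(H/\overline{H^{\dagger(m)}})(\Z)$ for finitely many $m$. Only at that final stage is Lemma~\ref{Lemma 5} and Theorem~\ref{finiteness-OKS-flat-gp} applied again --- this time to the stabilizer groups $\overline{H^{\dagger(m)}}$, not to $H$. None of this is supplied by a reduction-theory argument in the style of Proposition~\ref{finite-fibre-OKS-to-K}, and your proposal as written does not indicate how the gap would close.
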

\begin{proof}
We do it in steps. \\
\textit{Step I: The Theorem reduces to showing that $I(\Z)\backslash\tilde{H}(\Z)/H(\Z)$ is finite.}

\begin{subproof}
Suppose $(\mcA,\lambda,\iota)$ and $(\mcA,\lambda,\iota')$ are two triples (on the left-hand side of $\Phi$) mapping to the same principally polarized abelian scheme $(\mcA,\lambda)$ on the right-hand side. 
By the Noether-Skolem Theorem, $\iota'(a)=b\iota(a)b^{-1}$ for some $b\in \underline{\Aut}(\mcA)(\Q)$ and for all $a\in\Oo_B$.

Since the images of both $\iota'$ and $\iota$ are stable under $\dagger$, we have for any $a\in\Oo_B$,
\[\iota'(a)=\iota'(a)^{\dagger}=(b\iota(a)b^{-1})^{\dagger}=(b^{-1})^{\dagger}\iota(a)^{\dagger}b^{\dagger}=(b^{-1})^{\dagger}\iota(a)b^{\dagger}.\]
Combined with the original identity
$\iota'(a)=b\iota(a)b^{-1}$, we obtain for any $a\in\Oo_B$,
\[b\iota(a)b^{-1}=(b^{-1})^{\dagger}\iota(a)b^{\dagger}.\]
Since the polarization $\lambda$ is required to be $\Oo_L$-linear and symmetric,  $(b^{-1})^{\dagger}=(b^{\dagger})^{-1}$, and thus 
for any $a\in\Oo_B$,
\[(b^{\dagger}b)\iota(a)(b^{\dagger}b)^{-1}=\iota(a).\qquad(*)\]
Now, $(*)$ implies that, for any $a\in\Oo_B$
\[(b^{\dagger}b)\iota(a)(b^{\dagger}b)^{-1}=\iota(a)=\iota(a)(b^{\dagger}b)(b^{\dagger}b)^{-1}\]
Multiplying both sides by $(b^{\dagger}b)$ on the right, we have, for any $a\in\Oo_B$,
\[(b^{\dagger}b)\iota(a)=\iota(a)(b^{\dagger}b).\]
Thus $b^{\dagger}b\in B'$, and since 
$b^{\dagger}b\in \End_S\mcA\otimes\Q$, we have $b^{\dagger}b\in H(\Q)$. Since $(b^{\dagger})^{\dagger}=b$, we have $(b^{\dagger}b)^{\dagger}=b^{\dagger}b$, and thus we also have $b^{\dagger}b\in H^{\dagger}(\Q)\subset\underline{\Aut}(\mcA)^{\dagger}(\Q)$. 
Recall the map we defined earlier 
\begin{align}\label{map-varp}
    \wp: \underline{\Aut}(\mcA)&\to\underline{\Aut}(\mcA)^{\dagger}\\
    m&\mapsto mm^{\dagger}
\end{align}
and recall that we also defined $\tilde{H}:=\wp^{-1}(H^{\dagger})\subset\underline{\Aut}(\mcA)$. Thus we have
\[b\in\tilde{H}(\Q)\subset\underline{\Aut}(\mcA)(\Q).\]
On the other hand, since $\iota'=b\iota b^{-1}$ sends $\Oo_B$ to $\End(\mcA)$, we also have $b\in \underline{\Aut}(\mcA)(\Z)$. 
Moreover, we can consider $b\mod H\in (\underline{\Aut}(\mcA)/H)(\Z)\subset L(\Z)$. 
By Proposition \ref{finiteness-OKS-flat-gp}, $H^1(\Spec\Z,H)$ is a finite set. 

We then apply Lemma \ref{Lemma 5} above to the following sequence
\[\underline{\Aut}(\mcA)(\Z)\to(\underline{\Aut}(\mcA)/H)(\Z)\to H^1(\Spec\Z,H)\]
and conclude that $\Aut(\mcA)(\Z)$ has only finitely many orbits on $(\Aut(\mcA)/H)(\Z)$.

Since any such $\iota'$ is determined by its conjugating element $b\in\tilde{H}(\Q)$ which in fact comes from $b\in\tilde{H}(\Z)$ (because we fixed an $\iota$ at the beginning), 
to show that there are only finitely many possibilities for $\iota'$, it suffices to show that there are finitely many 
\[b\in\tilde{H}(\Z)=\Aut(\mcA)(\Z)\cap\tilde{H}(\Q).\]
And since $\Aut(\mcA)(\Z)$ has finitely many orbits on $(\Aut(\mcA)/H)(\Z)$, it suffices to show that $\tilde{H}(\Z)/H(\Z)$ is finite.\\ 
On the other hand, 
$I$ is compact, thus $I(\Z)$ is compact and discrete, so $I(\Z)$ must be finite. Thus to show that $\tilde{H}(\Z)/H(\Z)$ is finite, it suffices to show that $I(\Z)\backslash\tilde{H}(\Z)/H(\Z)$ is finite. 
\end{subproof}

\textit{Step II: The theorem further reduces to showing that $H^{\dagger}(\Z)/H(\Z)$ is finite. }
\begin{subproof}
Recall the map \ref{map-varp} we defined earlier.  
Now we restrict the map $\wp$ to the preimage of $H^{\dagger}$ (which we called $\tilde{H}$), and consider
\begin{align*}
    \wp: \tilde{H}&\to H^{\dagger}\\
    m&\mapsto m^{\dagger}m
\end{align*}
Since $I=\ker\wp$, the above map factors through the projection $\tilde{H}\to I\backslash \tilde{H}$, 
and thus the map $I\backslash \tilde{H}\hookrightarrow H^{\dagger}$ identifies $I\backslash \tilde{H}$ with a subspace of $H^{\dagger}$. This identification is $H$-equivariant 
when we let $H$ act on $H^{\dagger}$ via $m\mapsto h^{\dagger}mh$ for $h\in H$ and $m\in H^{\dagger}$. \\
Thus in order to show that $I(\Z)\backslash\tilde{H}(\Z)/H(\Z)$ is finite, it suffices to show that $H^{\dagger}(\Z)/H(\Z)$ is finite.
\end{subproof}

\textit{Step III: In order to show that $H^{\dagger}(\Z)/H(\Z)$ is finite, we first show the map $\vartheta$ (to be defined below) is smooth and surjective over $\Z[\frac{1}{2}]$.}
\begin{subproof}
Consider the map 
\begin{align}\label{7.1}
    \vartheta: H\times H^{\dagger}&\to H^{\dagger}\times H^{\dagger}\\
    (h,m)&\mapsto (h^{\dagger}mh,m)
\end{align}
Consider the fibre of $\vartheta$ over a point $(m,m')$ and we obtain a scheme 
\[\vartheta^{-1}(\{(m,m')\})=\{(h,m')\in H\times \{m'\}: h^{\dagger}m'h=m\}:=H_{m,m'}\times\{m'\}\]
where we denote the scheme 
$H_{m,m'}:=\{h\in H: h^{\dagger}m'h=m\}$. 
Consider the involution on $\Oo_{B'}$ given by
\begin{align*}
    \dagger(m):\Oo_{B'}&\to \Oo_{B'}\\
    h&\mapsto m^{-1}h^{\dagger}m
\end{align*}
Define the scheme 
$H^{\dagger(m)}:=\{h\in H: h^{\dagger}mh=m\}=\{h\in H: h^{\dagger (m)}h=1\}$. 
Now, the scheme $H_{m,m'}$ is either empty, or a torsor under $H^{\dagger(m)}$. 
The generic fibre of $H^{\dagger(m)}$ has a connected component of identity which is reductive. 
To compute the dimension of the fibres 
of $H^{\dagger(m)}$ over $\Z[1/2]$, 
we study its Lie algebra $\Lie (H^{\dagger(m)})$. 
Note that 
$\Lie({H^{\dagger(m)}})=(\Lie H)^{\dagger(m)=-1}$.

Now, since $\dagger(m)$ and $\dagger$ are conjugate to each other, they have the same number of eigenvalues equal to $-1$ when acting on $B'$.  
Thus the dimension of this Lie algebra $\Lie(H)^{\dagger(m)=-1}$ over any point of $\Spec\Z[1/2]$ is the same as the dimension of $(\Lie H)^{\dagger=-1}$, which is equal to $\dim H-\dim H^{\dagger}$. 
On the other hand, since
\[\dim(\Lie H)^{\dagger=-1}\geq \dim H^{\dagger(m)}\geq \dim H-\dim H^{\dagger}\]
Thus all inequalities in the above achieve equality: 
\[\dim H^{\dagger(m)}=\dim (\Lie H)^{\dagger=-1}=\dim \Lie (H^{\dagger(m)}).\]
Thus $H^{\dagger(m)}$ is smooth over $\Z[1/2]$ and $\dim H^{\dagger(m)}=\dim H-\dim H^{\dagger}$. 
Now, since $H_{m,m'}$ is a torsor under $H^{\dagger(m)}$ which is smooth of dimension independent of $m$, we have that $H_{m,m'}$ is also smooth of dimension independent of $m$ or $m'$. 
Therefore, the map $\vartheta$ has smooth equidimenisonal fibres $H_{m,m'}$ over $\Z[1/2]$, in particular $\vartheta$ is flat over $\Z[1/2]$, as it is a map of smooth $\Z$-schemes. 

To show that $\vartheta$ is surjective over $\Z[1/2]$: we first note that the above argument implies that $\vartheta$ is an open map. 
In particular, we consider for any $m\in H^{\dagger}$, the restriction
    $\vartheta|_{H^{\dagger}\times m}: H^{\dagger}\times m\to H^{\dagger}\times m$. 
Therefore, in any fibre of $H^{\dagger}$ over a point of $\Spec\Z[1/2]$, the orbits of $H$ acting on $H^{\dagger}$ are open. 
Since $H^{\dagger}$ has connected fibres, this implies that there is only one orbit, and thus $H$ acts transitively on the fibres of $H^{\dagger}$. Hence $\vartheta$ is surjective over $\Z[1/2]$. 
\end{subproof}

\textit{Step IV: The situation over $\Z$: reduces to showing that $F_m(\Z)/H(\Z)$ is finite.}
\begin{subproof}
Let $(H^{\dagger})_I$ denote the blow up of $H^{\dagger}$ in an ideal $I$ supported over the prime $2$. 
We consider the inverse limit
$(H^{\dagger})^{\sim}=\varprojlim_I (H^{\dagger})_I$. The map $(H^{\dagger})^{\sim}\to H^{\dagger}$ induces a bijection on $\Z$-points. 
Let $m\in H^{\dagger}$, and consider the orbit map $H\to H^{\dagger}$ 
obtained by taking the fibre of $\vartheta$ over $m$. 
We denote by $H_m^{\sim}$ the proper transform of $H$ with respect to $(H^{\dagger})^{\sim}\to H^{\dagger}$. That is, $H_m^{\sim}$ is the closure of the generic fibre of $H$ in $H\times (H^{\dagger})^{\sim}$. 
By \cite[Theorem 5.2.2.]{Raynaud-Gruson}, the map 
$H_m^{\sim}\to (H^{\dagger})^{\sim}$ 
is flat, hence open. 
We denote by $F_m$ its image. This is an open set containing the point corresponding to $m$. 
Let $F=\bigcup\limits_mF_m$, an open subset of $(H^{\dagger})^{\sim}$ containing $H^{\dagger}(\Z)$. Since $(H^{\dagger})^{\sim}$ is quasi-compact (it is an inverse limit of quasi-compact schemes), $F$ is a union of finitely many $F_m$. Thus it suffices to show that $F_m(\Z)/H(\Z)$ is finite.
\end{subproof}

\textit{Step V: The finiteness of $F_m(\Z)/H(\Z)$ reduces to the finiteness of $H/\overline{H^{\dagger(m)}}(\Z)$.}

\begin{subproof}
Note that $H_m^{\sim}\to F_m$ is of finite type, thus by a theorem of Skolem (\cite{Moret-Bailly-Skolem12}, if $m'\in F_m(\Z)$, there is an $h\in H_m^{\sim}(\Oo_K)=H(\Oo_K)$ with $h\cdot m=m'$ for some number field $K$. 

We denote by $\overline{H^{\dagger(m)}}$ the closure of $H^{\dagger(m)}$ in $H$, so that this is a finite type 
flat group scheme over $\Z$. Then the image of $h$ in $H_m^{\dagger}:=H/\overline{H^{\dagger(m)}}$ is in $H/\overline{H^{\dagger(m)}}(\Z):=H_m^{\dagger}(\Z)$ since $m,m'\in H^{\dagger}(\Z)$. 
Thus in order to show that $F_m(\Z)/H(\Z)$ is finite, it suffices to show that $H/\overline{H^{\dagger(m)}}(\Z)$ is finite.
\end{subproof}
Finally, to show that $H/\overline{H^{\dagger(m)}}(\Z)$ is finite, by Lemma \ref{Lemma 5}, it suffices to show that the set of torsors $H^1(\Spec\Z,\overline{H^{\dagger(m)}})$ 
is finite, and this follows from Proposition \ref{finiteness-OKS-flat-gp}. 
Thus we have finished the proof of Theorem \ref{main_PEL_thm}. 
\end{proof}

\textit{Acknowledgement.} The author would like to thank Kestutis Cesnavicius and Mark Kisin for helpful conversations.

\bibliographystyle{amsalpha}
\bibliography{bibfile}

\end{document}